\numberwithin{equation}{section}
\renewcommand{\Re}{{\ensuremath{\mathrm{Re\,}}}}
\DeclareSymbolFont{SY}{U}{psy}{m}{n}
\DeclareMathSymbol{\emptyset}{\mathord}{SY}{'306}
\DeclareMathOperator{\Ran}{Ran}
\DeclareMathOperator{\Ker}{Ker}
\DeclareMathOperator{\Dom}{Dom}
\DeclareMathOperator{\spec}{spec}
\newcommand{\norm}[1]{\|#1\|}
\newcommand{\abs}[1]{|#1|}
\newcommand{\dav}{\mathfrak D}
\newcommand{\C}{\mathbb{C}}
\newcommand{\R}{\mathbb{R}}
\newcommand{\N}{\mathbb{N}}
\newcommand{\EE}{\mathsf{E}}
\newcommand{\cC}{{\mathcal C}}
\newcommand{\cD}{{\mathcal D}}
\newcommand{\cG}{{\mathcal G}}
\newcommand{\cH}{{\mathcal H}}
\newcommand{\cL}{{\mathcal L}}
\newcommand{\cN}{{\mathcal N}}
\newcommand{\cS}{{\mathcal S}}
\newcommand{\cT}{{\mathcal T}}
\newcommand{\cU}{{\mathcal U}}
\newcommand{\cV}{{\mathcal V}}
\newtheorem{introtheorem}{Theorem}{\bf}{\it}
\newtheorem{theorem}{Theorem}[section]{\bf}{\it}
\newtheorem{proposition}[theorem]{Proposition}{\bf}{\it}
\newtheorem{corollary}[theorem]{Corollary}{\bf}{\it}
\newtheorem{lemma}[theorem]{Lemma}{\bf}{\it}
\newtheorem{remark}[theorem]{Remark}{\it}{\rm}
\newtheorem{hypothesis}[theorem]{Hypothesis}{\bf}{\it}
\newtheorem{example}[theorem]{Example}{\it}{\rm}
\renewcommand{\theenumi}{\alph{enumi}}
\title[On graph subspaces]{On invariant graph subspaces}
\subjclass[2010]{Primary 47A62; Secondary 47A15, 47A55}
\keywords{Reducing subspace, graph subspace, Riccati equation, block diagonalization}
\author[K.\ A.\ Makarov]{Konstantin A.\ Makarov}
\address{K.~A.~Makarov,
Department of Mathematics, University of Missouri,
Columbia, MO 65211, USA}
\email{makarovk@missouri.edu}
\author[S.\ Schmitz]{Stephan Schmitz}
\address{S.~Schmitz,
Department of Mathematics, University of Missouri,
Columbia, MO 65211, USA}
\email{schmitzst@missouri.edu}
\author[A.\ Seelmann]{Albrecht Seelmann}
\address{A.~Seelmann,
FB 08 - Institut f\"{u}r Mathematik, Johannes Gutenberg-Universi\-t\"{a}t Mainz,
Staudinger Weg 9, D-55099 Mainz, Germany}
\email{seelmann@mathematik.uni-mainz.de}
\begin{document}

\begin{abstract} 
In this paper we discuss the problem of decomposition  for unbounded  $2\times 2$ operator matrices by a pair of complementary
invariant graph subspaces. Under mild additional assumptions, we show that such a pair of subspaces decomposes the operator matrix
if and only if its domain is invariant for the angular operators associated with the graphs. As a byproduct of our considerations,
we suggest a new block diagonalization procedure that resolves related domain issues. In the case when only a single invariant
graph subspace is available, we obtain block triangular representations for the operator matrices.
 
\end{abstract}

\maketitle

\section{Introduction}

In the present work, we consider linear operators $B$ on a Hilbert space $\cH$ represented by $2\times 2$ operator matrices of the
form
\begin{equation}\label{eq:blockOp}
 B = \begin{pmatrix} A_0 & W_1\\ W_0 & A_1 \end{pmatrix} = \begin{pmatrix} A_0 & 0\\0 & A_1 \end{pmatrix} +
     \begin{pmatrix}0 & W_1\\ W_0 & 0\end{pmatrix}=:A+V
\end{equation}
with respect to a given orthogonal decomposition $\cH=\cH_0\oplus\cH_1$. In particular, if $B$ is unbounded, the operator matrix is
defined on its natural domain
\[
 \Dom(B) = \Dom(A+V) = \Dom(A) \cap \Dom(V)\,.
\]

Suppose that there is a pair of closed complementary subspaces $\cG_0$ and $\cG_1$ of $\cH$, that is,
\[
 \cG_0 +\cG_1=\cH \quad\text{ and }\quad \cG_0\cap\cG_1=\{0\}\,,
\]
such that both are invariant for $B$. If, in addition, this pair of subspaces \emph{decomposes} the operator $B$ in the sense that
the domain $\Dom(B)$ splits as
\begin{equation}\label{eq:intro:domSplit}
 \Dom(B) = \bigl(\Dom(B)\cap\cG_0\bigr) + \bigl(\Dom(B)\cap\cG_1\bigr)\,,
\end{equation}
then the operator $B$ can be represented as the direct sum of its parts $B|_{\cG_0}$ and $B|_{\cG_1}$, the restrictions of $B$ to
the subspaces $\cG_0$ and $\cG_1$, respectively.

Note that, in the case of unbounded operators $B$, the splitting property \eqref{eq:intro:domSplit} of the domain is not
self-evident, even if $B$ is self-adjoint; see \cite[Example 1.8]{Schm12} for a counterexample. For further discussion of the
notion of the decomposition of an operator by a pair of complementary subspaces we refer,  e.g., to \cite[Sec.\ III.5.6]{Kato66}.

Throughout this work, we are mostly interested in the particular case of complementary \emph{graph subspaces}
\begin{equation}\label{G1}
 \cG_0=\{f\oplus X_0f \mid f\in\cH_0\}=:\cG(\cH_0,X_0)
\end{equation}
and
\begin{equation}\label{G2}
 \cG_1=\{X_1g\oplus g \mid g\in\cH_1\}=:\cG(\cH_1,X_1)\, ,
\end{equation}
associated with bounded linear operators $X_0\colon\cH_0\to\cH_1$ and $X_1\colon\cH_1\to\cH_0$, respectively.

It should be noted that such invariant graph subspaces do not always exist, even if the operator matrix $B$ is bounded and
self-adjoint, see  \cite[Sec.\ 4]{KMM03} for a counterexample. On the other hand, in the current setting, considerable efforts
have been undertaken to show their existence in particular cases
\cite{AL95,ADL01,AMM03,Cue12,H69,KMM02,KMM2004,KMM2005,LT97,Mot95,MoSe06,Shk07,TW14,W11}; see also Theorem \ref{thm:subordinated}
below.

Given a pair of complementary invariant graph subspaces \eqref{G1} and \eqref{G2}, the consideration of the intersections
$\Dom(B)\cap\cG_j$, $j=0,1$, especially if the domain splitting \eqref{eq:intro:domSplit} is concerned, requires some extra
knowledge on mapping properties of the angular operators $X_j$. For instance, writing $\Dom(B)=\dav_0\oplus\dav_1$ with respect to
$\cH=\cH_0\oplus\cH_1$, it is a natural question whether or not the identities
\begin{equation}\label{eq:regular0}
 \Dom(B) \cap \cG(\cH_0,X_0) = \{f\oplus X_0f \mid f\in\dav_0\}
\end{equation}
and
\begin{equation}\label{eq:regular1}
 \Dom(B) \cap \cG(\cH_1,X_1) = \{X_1g\oplus g \mid g\in\dav_1\}
\end{equation}
hold. In this context, it is useful to identify the operators $X_0$ and $X_1$ with their corresponding trivial continuation to the
whole Hilbert space $\cH$, for which we keep the same notation. Upon this identification, the identities \eqref{eq:regular0} and
\eqref{eq:regular1} simply mean that the domain $\Dom(B)$ is invariant for $X_0$ and $X_1$, respectively. That is, the condition
$x=x_0\oplus x_1\in\Dom(B)$ implies that $X_jx=X_jx_j$ is contained in $\Dom(B)$, $j=0,1$.

It turns out that the invariance of $\Dom(B)$ for $X_0$ and $X_1$ above is closely related to the splitting property
\eqref{eq:intro:domSplit} for the complementary invariant graph subspaces $\cG_0$ and $\cG_1$. In fact, the following main result
of the present paper shows that these two requirements are equivalent under mild additional assumptions:

\begin{introtheorem}\label{thm:main}
 Let $\cG_0=\cG(\cH_0,X_0)$ and $\cG_1=\cG(\cH_1,X_1)$ be complementary graph subspaces associated with bounded linear operators
 $X_0\colon\cH_0\to\cH_1$ and $X_1\colon\cH_1\to\cH_0$, respectively. Suppose that the subspaces $\cG_0$ and $\cG_1$ both are
 invariant for the operator matrix $B$ in \eqref{eq:blockOp}. Assume, in addition, that the operators $B=A+V$ and $A-YV$ with
 \[
  Y := \begin{pmatrix} 0 & X_1\\ X_0 & 0 \end{pmatrix}
 \]
 are closed and have a common point $\lambda$ in their resolvent sets.

 Then, the following are equivalent:
 \begin{itemize}
 \item[i)]  the domain $\Dom(B)$ splits as in  \eqref{eq:intro:domSplit};
 \item[ii)]  the graph  subspaces $\cG_0$ and $\cG_1$  are  invariant for $(B-\lambda)^{-1}$;
 \item[iii)]  the domain $\Dom(B)$ is invariant for the angular operators $X_0$ and $X_1$.
 \end{itemize}
\end{introtheorem}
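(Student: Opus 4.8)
The plan is to build everything from the bounded operator $I-Y$ together with the block-diagonal operator $A-YV$, whose diagonal blocks are $A_0-X_1W_0$ and $A_1-X_0W_1$. Two algebraic observations come first. Complementarity of $\cG_0$ and $\cG_1$ is equivalent to bounded invertibility of $I-Y$ (equivalently of $I-Y^2=\operatorname{diag}(I-X_1X_0,\,I-X_0X_1)$), and a one-line computation gives $(I-Y)\cG_0=\cH_0$ and $(I-Y)\cG_1=\cH_1$; thus $I-Y$ is an isomorphism carrying the graph subspaces to the coordinate subspaces. Secondly, the invariance of $\cG_0$ and $\cG_1$ for $B$ is exactly the pair of Riccati equations for $X_0$ and $X_1$, and these are precisely what is needed to verify the intertwining relation
\[
 (I-Y)(B-\lambda)=(A-YV-\lambda)(I-Y),
\]
which reduces to the identity $AY-YA+V-YVY=0$. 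Since $Y$ is bounded, $\Dom(A-YV)=\Dom(A)\cap\Dom(V)=\Dom(B)$, so $A-YV$ is a legitimate block-diagonal companion of $B$ on the same domain.

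With $P$ the bounded projection onto $\cG_0$ along $\cG_1$, the equivalence (i)$\Leftrightarrow$(ii) is the classical characterization of decomposition by a pair of complementary invariant subspaces \cite[Sec.~III.5.6]{Kato66}: invariance of $\cG_0$ and $\cG_1$ for $(B-\lambda)^{-1}$ is the same as the commutation $P(B-\lambda)^{-1}=(B-\lambda)^{-1}P$; since $\Ran(B-\lambda)^{-1}=\Dom(B)$, this yields $P\Dom(B)\subseteq\Dom(B)$ and $PB=BP$ on $\Dom(B)$, which is the splitting \eqref{eq:intro:domSplit}, the reverse implication being routine. This part uses neither the graph structure nor the hypotheses on $A-YV$.

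The implication (iii)$\Rightarrow$(ii) is then straightforward. Assuming $\Dom(B)$ is invariant for $Y$, fix $y\in\cH$ and put $x=(B-\lambda)^{-1}y\in\Dom(B)$; then $(I-Y)x\in\Dom(B)=\Dom(A-YV)$, so the intertwining relation applies and gives $(A-YV-\lambda)(I-Y)x=(I-Y)y$, that is,
\[
 (I-Y)(B-\lambda)^{-1}=(A-YV-\lambda)^{-1}(I-Y).
\]
As $A-YV$ is block diagonal, $(A-YV-\lambda)^{-1}$ preserves $\cH_0$ and $\cH_1$, and conjugating back by $I-Y$ shows that $(B-\lambda)^{-1}$ preserves $\cG_0=(I-Y)^{-1}\cH_0$ and $\cG_1=(I-Y)^{-1}\cH_1$, i.e.\ (ii).

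The one direction that genuinely requires the standing hypotheses---and the step I expect to be the crux---is (i)$\Rightarrow$(iii). Here the decomposition furnishes closed parts $B|_{\cG_0}$ and $B|_{\cG_1}$, each with $\lambda$ in its resolvent set. Transporting $B|_{\cG_0}$ to $\cH_0$ through the isomorphism $f\mapsto(f,X_0f)$ yields a closed operator acting as $A_0+W_1X_0$ on $\{f:(f,X_0f)\in\Dom(B)\}$ with $\lambda$ in its resolvent set, while the Riccati equations give the operator identity $A_0-X_1W_0=(I-X_1X_0)(A_0+W_1X_0)(I-X_1X_0)^{-1}$, exhibiting the diagonal block $A_0-X_1W_0$ of $A-YV$---closed with $\lambda$ in its resolvent set by hypothesis---as another closed realization of the same action. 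The heart of the matter is to force these two closed realizations to share the same domain; the tool is the rigidity principle that a closed operator with $\lambda$ in its resolvent set admits no proper closed restriction for which $\lambda$ is again in the resolvent set, so that arranging one realization as a closed restriction of the other makes their domains agree. Unwinding the resulting domain equality (and its analogue on $\cH_1$) through $I-Y$ is exactly the regularity \eqref{eq:regular0}--\eqref{eq:regular1}, hence the invariance of $\Dom(B)$ for $X_0$ and $X_1$. The delicate point throughout is the domain bookkeeping: the formal intertwining is only valid on $\{x\in\Dom(B):Yx\in\Dom(B)\}$, and upgrading it to the genuine operator statements above is precisely what the closedness of $B$ and $A-YV$ together with the common resolvent point $\lambda$ make possible.
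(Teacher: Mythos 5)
Your reductions (i) $\Leftrightarrow$ (ii) and (iii) $\Rightarrow$ (ii) are sound: the first is the classical argument (the paper's Lemma \ref{lem:resInv}), and the second correctly uses that under (iii) the set $\cD=\{x\in\Dom(B) \mid Yx\in\Dom(B)\}$ coincides with $\Dom(B)$, so the intertwining relation holds on all of $\Dom(B)$ and conjugation by $I_\cH-Y$ transports the block-diagonal resolvent of $A-YV$ to $(B-\lambda)^{-1}$. The problem is the implication (i) $\Rightarrow$ (iii), which you yourself identify as the crux, and the gap sits exactly where you place the ``heart of the matter''. You assert that the Riccati equations give the \emph{operator identity} $A_0-X_1W_0=(I-X_1X_0)(A_0+W_1X_0)(I-X_1X_0)^{-1}$. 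They do not. What they give is the pointwise intertwining
\[
 (A_0-X_1W_0)(I-X_1X_0)f=(I-X_1X_0)(A_0+W_1X_0)f
\]
only for those $f\in\dav_0$ with $X_0f\in\dav_1$ \emph{and} $X_1X_0f\in\dav_0$ (the second condition is needed to invoke the Riccati equation for $X_1$ at $g=X_0f$). The description of this set already involves precisely the mapping properties of $X_0$ and $X_1$ that assertion (iii) claims. Consequently, your two ``closed realizations'' are a priori related only on this restricted set; neither is known to be a restriction of the other, so the rigidity principle (which is exactly the paper's Lemma \ref{lem:schm}) has nothing to act on. Invoking rigidity is not a substitute for producing the extension relation that is its hypothesis.

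The missing step --- and the only place where hypothesis (i) can enter --- is to show that the splitting \eqref{eq:intro:domSplit} forces $I_\cH-Y$ to map $\cD$ \emph{onto} $\dav=\Dom(B)$. The paper does this in Proposition \ref{prop:extRel}\,(a): the splitting is equivalent to $\dav=\Ran\bigl((I_\cH+Y)|_\cD\bigr)$, hence to $\dav=\Ran\bigl((I_\cH-Y)|_\cD\bigr)$ by the symmetry $J(I_\cH-Y)J=I_\cH+Y$, and this is exactly what turns the restricted intertwining into the genuine extension relation $(A-YV)(I_\cH-Y)\subset(I_\cH-Y)(A+V)$. Only then does Lemma \ref{lem:schm} apply (both sides are bijective after subtracting $\lambda$, by the standing hypotheses), giving equality of domains, i.e.\ $\cD=\dav$, which is (iii). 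Note also that your componentwise formulation creates a second, smaller gap that you pass over with ``unwinding'': even after rigidity identifies the domains on $\cH_0$ and on $\cH_1$ separately, deducing $X_0\dav_0\subset\dav_1$ and $X_1\dav_1\subset\dav_0$ requires recombining the information from the two components (an element of $\dav_0$ is only reached as $(I-X_1X_0)x_0$ for suitable $x_0$, and one must then route $X_0$ of it through the $\cH_1$-side identity). The paper's block-level formulation with $I_\cH-Y$ avoids this entirely, since equality of domains there literally reads $\cD=\dav$, which is assertion (iii).
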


It should be noted that in \cite{Shk07}, the invariance of $\Dom(B)$ for the angular operator(s) has been incorporated into the
notion of invariance for graph subspaces. This, however, deviates from the standard notion of invariance for general subspaces
(see, e.g., \cite[Definition 2.9.11]{Tre08}). Moreover, the invariance of $\Dom(B)$ for the angular operators $X_0$ and $X_1$
without additional hypotheses is far from being obvious, even if the requirements i) and/or ii) are satisfied.

Regarding the proof of Theorem \ref{thm:main}, we first remark that the equivalence between i) and ii) is essentially well known,
even in a more general context, see, e.g., \cite[Remark 2.3 and Lemma 2.4]{TW14}; see also Lemma \ref{lem:resInv} below. Thus, the
proof of Theorem \ref{thm:main} reduces to the justification of either of the equivalences i) $\Leftrightarrow$ iii) or ii)
$\Leftrightarrow$ iii).

In this paper we justify these equivalences independently, thereby providing two alternative proofs of Theorem \ref{thm:main},
which shed some light on different aspects of the problem.

Our first proof of Theorem \ref{thm:main} establishes the equivalence between i) and iii). Here, the reasoning is, in essence,
based on the observation that under either of the conditions i) and iii) the operator matrix $B=A+V$ admits the block
diagonalization
\begin{equation}\label{mainfac}
 (I_\cH-Y)(A+V) (I_\cH-Y)^{-1} = A-YV = \begin{pmatrix} A_0-X_1W_0 & 0\\ 0 & A_1-X_0W_1 \end{pmatrix},
\end{equation}
see the discussion after Remark \ref{rem:closed} below.

Note that the concept of block diagonalization for operator matrices with unbounded entries has already been widely discussed in
the literature, see, e.g., \cite{AMM03,Cue12,LT97,Tre08}. However, the general statement of Theorem \ref{thm:main}, as well as the
similarity relation in the particular form \eqref{mainfac}, seems to be new; a detailed discussion on old and new results in this
area can be found in Remarks \ref{rem:alternative} and \ref{rem:comparisonLT} below.

In our second, independent, proof of Theorem \ref{thm:main}, we directly show instead that ii) and iii) are equivalent. In fact,
this is done by dealing with the graph subspaces $\cG_0$ and $\cG_1$ separately:
The subspace $\cG_j$ is invariant for $(A+V-\lambda)^{-1}$ if and only if $\Dom(B)$ is invariant for the angular operator $X_j$,
$j=0,1$, see Theorem \ref{thm:single} below. The proof of the latter rests to some extent on the Schur block triangular
decomposition, which, in the particular case of $j=0$, has the upper triangular form (see Lemma \ref{lem:triDiag} below)
\begin{equation}\label{eq:triDiagintro}
 \begin{pmatrix} I_{\cH_0} & 0\\ -X_0 & I_{\cH_1} \end{pmatrix} (A+V) \begin{pmatrix} I_{\cH_0} & 0\\ X_0 & I_{\cH_1} \end{pmatrix}
 =\begin{pmatrix} (A_0+W_1X_0)|_{\dav_0} & W_1\\ 0 & A_1-X_0W_1 \end{pmatrix},
\end{equation}
provided that $\Dom(B)$ is invariant for $X_0$.

In both approaches, our considerations rely on a detailed study of mapping properties of the angular operators $X_j$, $j=0,1$,
which are solutions to the associated operator Riccati equations, see eqs.\ \eqref{eq:RiccatiX0} and \eqref{eq:RiccatiX1} below. It
is well known that these equations play an important role in the search for invariant graph subspaces in general. In the context of
the present paper, the Riccati equations eventually yield the block diagonalization \eqref{mainfac} and the block triangular
representation \eqref{eq:triDiagintro}. For further discussion of operator Riccati equations in perturbation theory for block
operator matrices, we refer to \cite{AMM03}, the monograph \cite{Tre08}, and references therein.

In Theorem \ref{thm:main}, the condition of $A+V$ and $A-YV$ to have a common point in their resolvent sets is natural in the sense
that their resolvent sets will eventually agree by \eqref{mainfac}. In fact, our first proof of Theorem \ref{thm:main} shows that
the block diagonalization \eqref{mainfac} is available as soon as the statements in assertions i) and iii) of Theorem
\ref{thm:main} as such hold simultaneously, see Proposition \ref{prop:extRel} below. Thus, unless the resolvent set of $A+V$ is
empty, the condition of intersecting resolvent sets is not only sufficient but also necessary for the claimed equivalence to hold.
However, it is unclear whether the operators $A+V$ and $A-YV$ always have a common point in their resolvent sets. It therefore
remains an open problem whether the domain splitting \eqref{eq:intro:domSplit} and the invariance of $\Dom(B)$ for the angular
operators $X_0$ and $X_1$ are in general logically independent or not.

At this point, it should be noted that the resolvent sets of $A+V$ and $A-YV$ automatically intersect if, say, the diagonal part
$A$ is self-adjoint and the off-diagonal part $V$ is small in some sense, e.g.\ bounded or relatively bounded with sufficiently
small $A$-bound, see Corollary \ref{cor:Neumann}\,(b) below. In this regard, Theorem \ref{thm:main} can be interpreted as an
extension of \cite[Lemma 5.3]{AMM03}, where that case with bounded symmetric $V$ was discussed.

In our considerations in Sections \ref{sec:app} and \ref{sec:Example} to guarantee intersecting resolvents sets we restrict ourselves
to the diagonally dominant case, that is, to the case where $V$ is relatively bounded with respect to $A$. Nevertheless, or results
might also be useful for block diagonalization of some classes of off-diagonally dominant matrices, in particular, Dirac operators
\cite{Cue12}, \cite{Th92}. We briefly discuss a relevant application in Solid State Physics in
Example \ref{exgraph} at the end of  Section \ref{sec:Example}. For a comprehensive exposition of other applications in
mathematical physics we refer to \cite[Chapter 3]{Tre08} and references therein.

The paper is organized as follows.

In Section \ref{sec:invSub}, we collect some preliminary facts on pairs of invariant (graph) subspaces. In particular, we provide a
proof of the equivalence between assertions i) and ii) of Theorem \ref{thm:main}.

The equivalence between i) and iii) in Theorem \ref{thm:main} is shown in Section \ref{sec:mainThm}, where also the block
diagonalization formula \eqref{mainfac} is derived. Furthermore, this block diagonalization is compared to previously known results
in the literature.

In Section \ref{sec:general}, we  show the equivalence between ii) and iii), thus providing the second independent proof of the
theorem, and establish the block triangular decomposition \eqref{eq:triDiagintro}.

Section \ref{sec:app} is devoted to relatively bounded off-diagonal perturbations $V$ of a closed diagonal operator matrix $A$. We
discuss sufficient conditions on $A$ and $V$ that ensure the existence of a common point in the resolvent sets of $A+V$ and $A-YV$,
so that Theorem \ref{thm:main} can be applied.

In the final Section \ref{sec:Example}, as an example of our considerations, we block diagonalize a self-adjoint operator matrix
$B=A+V$ for which the spectra of the diagonal entries are subordinated, cf.\ \cite[Corollary 3.2]{LT97}.
 
Some words about notation:

The domain of a linear operator $K$ is denoted by $\Dom(K)$, its range by $\Ran(K)$, and its kernel by $\Ker(K)$. The restriction
of $K$ to a given subset $\cC$ of $\Dom(K)$ is written as $K|_{\cC}$.

Given another linear operator $L$, we write the extension relation $K\subset L$ (or $L\supset K$) if $L$ extends $K$, that is, if
$\Dom(K)\subset\Dom(L)$ and $Kx=Lx$ for $x\in\Dom(K)$. The operator equality $K=L$ means that $K\subset L$ and $K\supset L$.

We write $\rho(K)$ for the resolvent set of a closed operator $K$ on a Hilbert space, and $K^*$ stands for the adjoint operator of
$K$ if $K$ is densely defined. The identity operator on a Hilbert space $\cH$ is written as $I_{\cH}$. Multiples $\lambda I_{\cH}$
of the identity are abbreviated by $\lambda$. Finally, the inner product and the associated norm on $\cH$ are denoted by
$\langle\cdot,\cdot\rangle_{\cH}$ and $\norm{\cdot}_{\cH}$, respectively, where the subscript $\cH$ is usually omitted.

\subsection*{Acknowledgements.}
Individual parts of the material presented in this work are contained in the Ph.D.\ theses \cite{SchmDiss} and \cite{SeelDiss} by
the authors S.\ Schmitz and A.\ Seelmann, respectively.

The authors would like to thank Vadim Kostrykin for helpful discussions on the topic.\\
K.~A.~Makarov is indebted to the Institute for Mathematics for its kind hospitality during his one month stay at the Johannes
Gutenberg-Universit\"at Mainz in the Summer of 2014. The work of K.\ A.\ Makarov has been supported in part by the Deutsche
Forschungsgemeinschaft, grant KO 2936/7-1.

\section{Invariant subspaces}\label{sec:invSub}

In this first section, we introduce the basic notions used throughout the paper and discuss preliminary facts on pairs of invariant
(graph) subspaces. In particular, we reproduce the proof of the equivalence between the assertions i) and ii) of Theorem
\ref{thm:main}.

Let $B$ be a linear operator on a Hilbert space $\cH$. A subspace $\cU\subset\cH$ is called \emph{invariant} for $B$ if
\[
 Bx\in\cU \quad\text{ for all }\quad x\in\Dom(B)\cap\cU\,.
\]
A pair of complementary subspaces $\cU,\cV\subset\cH$, that is,
\[
 \cH = \cU+\cV \quad\text{ and }\quad \cU\cap\cV=\{0\}\,,
\]
is said to \emph{decompose} the operator $B$ if both $\cU$ and $\cV$ are invariant for $B$ and the domain $\Dom(B)$ splits as
\begin{equation}\label{eq:domSplit}
 \Dom(B) = \bigl(\Dom(B)\cap\cU\bigr) + \bigr(\Dom(B)\cap\cV\bigr)\,.
\end{equation}

The following well-known result provides a characterization of decomposing pairs of subspaces in the case where the operator
$B-\lambda$ is bijective for some constant $\lambda$.

\begin{lemma}[cf.\ {\cite[Remark 2.3 and Lemma 2.4]{TW14}}]\label{lem:resInv}
 Suppose that for some constant $\lambda$ the operator $B-\lambda$ is bijective. Then, a pair of complementary subspaces $\cU$ and
 $\cV$ decomposes the operator $B$ if and only if both subspaces are invariant for $B$ and $(B-\lambda)^{-1}$.

 \begin{proof}
  First, suppose that $\cU$ and $\cV$ decompose $B$. We have to show that $\cU$ and $\cV$ are invariant for $(B-\lambda)^{-1}$. Let
  $x\in\cU$  be arbitrary. Since one has $(B-\lambda)^{-1}x\in\Dom(B)$, the splitting \eqref{eq:domSplit} yields
  \[
   (B-\lambda)^{-1}x = u + v
  \]
  for some $u\in\Dom(B)\cap\cU$ and $v\in\Dom(B)\cap\cV$. Thus,
  \[
   \cU\ni x = (B-\lambda)(u+v) = (B-\lambda)u + (B-\lambda)v\,.
  \]
  Taking into account that $\cU$ and $\cV$ are invariant for $B-\lambda$ and that $\cU$ and $\cV$ are complementary, one concludes
  that $(B-\lambda)v=0$, that is, $v=0$. Hence,
  \[
   (B-\lambda)^{-1}x = u\in\cU\,,
  \]
  so that $\cU$ is invariant for $(B-\lambda)^{-1}$. Analogously, $\cV$ is invariant for $(B-\lambda)^{-1}$.

  Conversely, suppose that $\cU$ and $\cV$ are both invariant for $B$ and $(B-\lambda)^{-1}$. Let $x\in\Dom(B)$ be arbitrary. Then,
  one has
  \[
   (B-\lambda)x = u + v
  \]
  for some $u\in\cU$ and $v\in\cV$. Since $\cU$ and $\cV$ are invariant for $(B-\lambda)^{-1}$, one concludes that
  \[
   x = (B-\lambda)^{-1}u + (B-\lambda)^{-1}v \in \bigl(\Dom(B)\cap\cU\bigr) + \bigl(\Dom(B)\cap\cV\bigr)\,.
  \]
  Hence, $\Dom(B)$ splits as in \eqref{eq:domSplit}.
 \end{proof}%
\end{lemma}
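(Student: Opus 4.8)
The plan is to prove the two implications separately, transferring vectors between the operator $B-\lambda$ and its everywhere-defined inverse $(B-\lambda)^{-1}$; bijectivity guarantees that $(B-\lambda)^{-1}$ is defined on all of $\cH$ and has range exactly $\Dom(B)$, which is what makes both directions run. I would record at the outset the elementary observation that invariance of a subspace for $B$ is equivalent to invariance for $B-\lambda$: for $u\in\Dom(B)\cap\cU$ one has $(B-\lambda)u=Bu-\lambda u$, and both summands lie in $\cU$ whenever $Bu$ does, since $\cU$ is a subspace. Because the notion ``decomposes'' already bundles in invariance for $B$ by definition, the forward direction only requires producing invariance for $(B-\lambda)^{-1}$, and the converse only requires the domain splitting.

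For the forward direction, assume $\cU,\cV$ decompose $B$ and fix $x\in\cU$. The vector $y:=(B-\lambda)^{-1}x$ lies in $\Dom(B)$, so the domain splitting \eqref{eq:domSplit} lets me write $y=u+v$ with $u\in\Dom(B)\cap\cU$ and $v\in\Dom(B)\cap\cV$. Applying $B-\lambda$ gives $x=(B-\lambda)u+(B-\lambda)v$, where by the preliminary observation $(B-\lambda)u\in\cU$ and $(B-\lambda)v\in\cV$. Here is the one step I would flag as the crux: since $x$ itself lies in $\cU$, the difference $x-(B-\lambda)u$ lies in $\cU\cap\cV=\{0\}$, forcing $(B-\lambda)v=0$, whereupon injectivity of $B-\lambda$ yields $v=0$ and hence $(B-\lambda)^{-1}x=u\in\cU$. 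The identical argument with the roles of $\cU$ and $\cV$ interchanged gives invariance of $\cV$.

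For the converse, assume $\cU,\cV$ are invariant for both $B$ and $(B-\lambda)^{-1}$ and fix $x\in\Dom(B)$. Complementarity $\cH=\cU+\cV$ lets me decompose $(B-\lambda)x=u+v$ with $u\in\cU$ and $v\in\cV$; applying $(B-\lambda)^{-1}$ gives $x=(B-\lambda)^{-1}u+(B-\lambda)^{-1}v$, and invariance of the subspaces for the resolvent places the two summands in $\cU$ and $\cV$ respectively, while both automatically lie in $\Dom(B)$ since that is the range of $(B-\lambda)^{-1}$. This exhibits $x$ as an element of $\bigl(\Dom(B)\cap\cU\bigr)+\bigl(\Dom(B)\cap\cV\bigr)$, which is exactly the domain splitting \eqref{eq:domSplit}. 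Apart from the uniqueness step flagged above, the whole argument is formal manipulation, so I expect no serious obstacle; the only care needed is to keep track of which operator ($B-\lambda$ versus its inverse) is being applied, and to invoke complementarity in the correct direction in each half.
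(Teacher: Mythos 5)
Your proof is correct and follows essentially the same route as the paper's own argument: the forward direction uses the domain splitting plus injectivity of $B-\lambda$ and complementarity to kill the $\cV$-component, and the converse applies $(B-\lambda)^{-1}$ to a decomposition of $(B-\lambda)x$. Your explicit preliminary remark that invariance for $B$ is equivalent to invariance for $B-\lambda$ is a fact the paper uses tacitly, but it is the same proof.
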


As a consequence, if a pair of subspaces $\cU$ and $\cV$ decomposes an operator $B$, then it follows from Lemma \ref{lem:resInv}
that for every constant $\lambda$ the operator $B-\lambda$ is bijective if and only if its parts, the restrictions $B|_\cU-\lambda$
and $B|_\cV-\lambda$, are both bijective. In particular, if $B$ is additionally assumed to be closed, then both parts $B|_\cU$ and
$B|_\cV$ are closed and we have the spectral identity
\[
 \spec(B) = \spec(B|_\cU) \cup \spec(B|_\cV)\,.
\]

For the rest of this paper we make the following assumptions that introduce a general framework for the off-diagonal perturbation
theory.
\begin{hypothesis}\label{hyp:blockMatrix}
 Given an orthogonal decomposition $\cH=\cH_0\oplus\cH_1$ of the Hilbert space, let
 \[
  \begin{pmatrix} A_0 & W_1\\ W_0 & A_1 \end{pmatrix} =
  \begin{pmatrix} A_0 & 0\\ 0 & A_1 \end{pmatrix} + \begin{pmatrix} 0 & W_1\\ W_0 & 0 \end{pmatrix} =: A + V\,.
 \]
 be a possibly unbounded $2\times2$ block operator matrix on the natural domain
 \[
  \dav := \Dom(A+V) = \bigl(\Dom(A_0)\cap\Dom(W_0)\bigr) \oplus \bigl(\Dom(A_1)\cap\Dom(W_1)\bigr) =: \dav_0 \oplus \dav_1\,.
 \]
\end{hypothesis}

Recall that a closed subspace $\cG\subset\cH$ is said to be a \emph{graph subspace} associated with a closed subspace
$\cN\subset\cH$ and a bounded operator $X$ from $\cN$ to its orthogonal complement $\cN^\perp$ if
\[
 \cG=\cG(\cN, X):=\{x\in \cH\,|\, P_{\cN^\perp }x=XP_\cN x\}\,,
\]
where $P_\cN$ and $P_{\cN^\perp}$ denote the orthogonal projections onto $\cN$ and $\cN^\perp$, respectively. Here, the operator
$X$ has been identified with its trivial continuation to the whole Hilbert space $\cH$ and this identification is used throughout
the paper.

In this context, the equivalence between the assertions i) and ii) of Theorem \ref{thm:main} is just a particular case of Lemma
\ref{lem:resInv}:
\begin{proof}[Proof of i) $\Leftrightarrow$ ii) in Theorem \ref{thm:main}]
 Apply Lemma \ref{lem:resInv} to the pair of graphs $\cG_0=\cG(\cH_0,X_0)$ and $\cG_1=\cG(\cH_1,X_1)$.
\end{proof}%

For the rest of the proof of Theorem \ref{thm:main}, we need the following well-known invariance criterion for graph subspaces,
see, e.g., \cite{Tre08} and references therein.
\begin{lemma}\label{lem:invariance}
 Assume Hypothesis \ref{hyp:blockMatrix}. The graph $\cG(\cH_0,X_0)$ of some bounded linear operator $X_0\colon\cH_0\to\cH_1$ is
 invariant for $A+V$ if and only if $X_0$ satisfies the Riccati equation
 \begin{equation}\label{eq:RiccatiX0}
  A_1X_0f - X_0A_0f - X_0W_1X_0f + W_0f = 0
 \end{equation}
 for all $f\in\dav_0\subset\cH_0$ such that $X_0f\in\dav_1$.

 Analogously, the graph $\cG(\cH_1,X_1)$ of a bounded linear operator $X_1\colon\cH_1\to\cH_0$ is invariant for $A+V$ if and only
 if
 \begin{equation}\label{eq:RiccatiX1}
  A_0X_1g - X_1A_1g - X_1W_0X_1g + W_1g = 0
 \end{equation}
 for all $g\in\dav_1\subset\cH_1$ such that $X_1g\in\dav_0$.
 
 \begin{proof}
  Define
  \[
   \cD_0 := \{ f\in\dav_0 \mid X_0f \in\dav_1\}\,.
  \]
  Observing that
  \[
   (A+V)\begin{pmatrix} f\\ X_0f \end{pmatrix} = \begin{pmatrix} A_0f + W_1X_0f\\ W_0f + A_1X_0f \end{pmatrix}
   \quad\text{ for }\quad f\in\cD_0\,,
  \]
  and taking into account that $\dav\cap\cG(\cH_0,X_0)=\{f\oplus X_0f \mid f\in\cD_0\}$, one concludes that the graph
  $\cG(\cH_0,X_0)$ is invariant for $A+V$ if and only if $(W_0 + A_1X_0)f=X_0(A_0 + W_1X_0)f$ holds for all $f\in\cD_0$. This, in
  turn, can be rewritten as \eqref{eq:RiccatiX0}. The second part is proved analogously by changing the roles of $\cH_0$ and
  $\cH_1$.
 \end{proof}%
\end{lemma}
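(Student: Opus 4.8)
The plan is to characterize invariance of the graph $\cG(\cH_0,X_0)$ directly from the definition and then reduce the resulting condition to the Riccati equation \eqref{eq:RiccatiX0}. First I would pin down the intersection $\dav\cap\cG(\cH_0,X_0)$. Since $\cG(\cH_0,X_0)=\{f\oplus X_0f\mid f\in\cH_0\}$ and $\dav=\dav_0\oplus\dav_1$, a vector $f\oplus X_0f$ lies in $\dav$ precisely when $f\in\dav_0$ and $X_0f\in\dav_1$; introducing
\[
 \cD_0:=\{f\in\dav_0\mid X_0f\in\dav_1\}
\]
this says $\dav\cap\cG(\cH_0,X_0)=\{f\oplus X_0f\mid f\in\cD_0\}$.

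Next I would compute the action of $A+V$ on such a vector. A block multiplication gives
\[
 (A+V)\begin{pmatrix} f\\ X_0f\end{pmatrix}=\begin{pmatrix} A_0f+W_1X_0f\\ W_0f+A_1X_0f\end{pmatrix},\qquad f\in\cD_0,
\]
which is legitimate exactly because $f\in\cD_0$ guarantees all four entries are applied to vectors in their respective domains. By the definition of invariance, $\cG(\cH_0,X_0)$ is invariant for $A+V$ if and only if this image lies in the graph for every $f\in\cD_0$, i.e.\ its lower component equals $X_0$ applied to its upper component:
\[
 W_0f+A_1X_0f=X_0\bigl(A_0f+W_1X_0f\bigr)\quad\text{for all }f\in\cD_0.
\]

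Rearranging this identity yields $A_1X_0f-X_0A_0f-X_0W_1X_0f+W_0f=0$, which is precisely \eqref{eq:RiccatiX0} on $\cD_0$, and conversely \eqref{eq:RiccatiX0} on $\cD_0$ forces the image back into the graph. This establishes the first equivalence; the second follows by interchanging the roles of $\cH_0$ and $\cH_1$ (equivalently, of $X_0$ and $X_1$), so that the analogous computation produces \eqref{eq:RiccatiX1}.

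The one point that requires care is the domain bookkeeping rather than any hard analysis: one must verify that $f\oplus X_0f\in\dav$ is genuinely equivalent to $f\in\cD_0$, so that the restriction of $A+V$ to the graph is exactly parametrized by $\cD_0$ and no domain subtlety is swept under the rug. Because $X_0$ is bounded and identified with its trivial continuation, and because the natural domain $\dav$ respects the orthogonal splitting $\cH_0\oplus\cH_1$, this verification is routine, and the rest is the purely algebraic rearrangement above. Thus I expect no serious obstacle; the proof is short once the intersection $\dav\cap\cG(\cH_0,X_0)$ is correctly identified.
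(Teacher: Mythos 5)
Your proposal is correct and follows essentially the same route as the paper's own proof: identify $\dav\cap\cG(\cH_0,X_0)$ via the set $\cD_0$, compute the block action of $A+V$ on graph vectors, and translate the invariance condition (lower component equals $X_0$ of the upper component) into the Riccati equation \eqref{eq:RiccatiX0}, with the second part by symmetry. The domain bookkeeping you flag is exactly the point the paper also handles, and your treatment of it is accurate.
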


\begin{remark}\label{rem:strong}
 If, in the situation of Lemma \ref{lem:invariance}, it is known in advance that
 \begin{equation}\label{eq:incl0}
  \Ran (X_0|_{\dav_0})\subset \dav_1
 \end{equation}
 and 
 \begin{equation}\label{eq:incl1}
  \Ran (X_1|_{\dav_1})\subset \dav_0\,,
 \end{equation}
 then the Riccati equations \eqref{eq:RiccatiX0} and \eqref{eq:RiccatiX1} hold for all $f\in\dav_0$ and $g\in\dav_1$, respectively.
 In this case, the operators $X_0$ and $X_1$ are called \emph{strong solutions} to the corresponding operator Riccati equations
 \[
  A_1X_0 - X_0A_0 - X_0W_1X_0 + W_0= 0
 \]
 and
 \[
  A_0X_1 - X_1A_1 - X_1W_0X_1 + W_1 = 0\,, 
 \]
 see \cite[Section 3]{AMM03}.
 
 It is worth noting that with the above mentioned identification for the operators $X_0$ and $X_1$, the inclusions \eqref{eq:incl0}
 and \eqref{eq:incl1} simply mean that $\Dom(A+V)$ is invariant for $X_0$ and $X_1$, respectively, cf.\ assertion iii) of Theorem
 \ref{thm:main}.
\end{remark}

\section{The first Proof of Theorem \ref{thm:main}. Block diagonalizations}\label{sec:mainThm}

In this section, the equivalence between the assertions i) and iii) of Theorem \ref{thm:main} is established and, at the same time,
the block diagonalization \eqref{mainfac} is derived. The latter is compared to previously known results in the literature, see
Remark \ref{rem:alternative} below.

The initial point of our considerations are the above mentioned Riccati equations \eqref{eq:RiccatiX0} and \eqref{eq:RiccatiX1} in
Lemma \ref{lem:invariance}, so we shall start with a closer inspection of them:

In the situation of Hypothesis \ref{hyp:blockMatrix}, suppose that two complementary graphs $\cG(\cH_0,X_0)$ and $\cG(\cH_1,X_1)$
associated with bounded linear operators $X_0\colon\cH_0\to\cH_1$ and $X_1\colon\cH_1\to\cH_0$, respectively, are invariant for
$A+V$. Then, the two Riccati equations \eqref{eq:RiccatiX0} and \eqref{eq:RiccatiX1} hold simultaneously and can therefore be
combined to a single block Riccati equation for the operator
\begin{equation}\label{eq:defY}
 Y = \begin{pmatrix} 0 & X_1\\ X_0 & 0 \end{pmatrix}.
\end{equation}
Namely,
\begin{equation}\label{eq:Ricc}
 AYx - YAx - YVYx + Vx = 0 \quad\text{ for }\quad x\in\cD\,,
\end{equation}
where
\begin{equation}\label{eq:defD}
 \cD := \{ x\in\dav \mid Yx\in\dav\}\,.
\end{equation}
In turn, this block Riccati equation can be rewritten as
\begin{equation}\label{eq:blockDiagStrong}
 (I_\cH-Y)(A+V)x = (A-YV)(I_\cH-Y)x \quad\text{ for }\quad x\in\cD\,.
\end{equation}

Here, as the following lemma shows, the operator $I_\cH-Y$ is an automorphism of $\cH$.

\begin{lemma}\label{lem:automorphisms}
 Let $\cG(\cH_0,X_0)$ and $\cG(\cH_1,X_1)$ be complementary graph subspaces. Then, the operators $I_\cH\pm Y$ with $Y$ defined as
 in \eqref{eq:defY} are automorphisms of $\cH$.

 \begin{proof} 
  First, observe that
  \begin{equation}\label{eq:Jsimilar}
   J(I_\cH-Y)J = I_\cH-JYJ= I_\cH + Y\,,
  \end{equation}
  where $J$ is the unitary block diagonal matrix given by 
  \begin{equation}\label{eq:defJ}
   J = \begin{pmatrix} I_{\cH_0} & 0\\ 0 & -I_{\cH_1} \end{pmatrix}.
  \end{equation}
 
  Since $I_\cH+Y$ maps $\cH_0$ and $\cH_1$ bijectively onto $\cG(\cH_0,X_0)$ and $\cG(\cH_1,X_1)$, respectively, and since the
  graphs are complementary subspaces, one concludes that $I_\cH+Y$ is an automorphism, and so is $I_\cH-Y$ by \eqref{eq:Jsimilar}.
 \end{proof}%
\end{lemma}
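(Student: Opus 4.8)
The plan is to verify that $I_\cH+Y$ and $I_\cH-Y$ are bounded bijections of $\cH$; their inverses are then automatically bounded by the bounded inverse theorem, since $\cH$ is complete, so each is an automorphism. Boundedness of $I_\cH\pm Y$ is immediate, as $X_0$ and $X_1$ are bounded and hence so is $Y$ in \eqref{eq:defY}. The whole issue is therefore bijectivity, and here the complementarity of the two graphs is exactly what is needed.

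First I would reduce the two sign cases to one. The matrix $J$ in \eqref{eq:defJ} is unitary and satisfies $J^2=I_\cH$, so the conjugation identity \eqref{eq:Jsimilar} gives $I_\cH-Y=J(I_\cH+Y)J$; thus $I_\cH-Y$ is a composition of automorphisms as soon as $I_\cH+Y$ is one, and it suffices to treat $I_\cH+Y$. The key computation is then to apply $I_\cH+Y=\begin{pmatrix}I_{\cH_0}&X_1\\X_0&I_{\cH_1}\end{pmatrix}$ to the summands of $\cH=\cH_0\oplus\cH_1$: one finds $(I_\cH+Y)(f\oplus 0)=f\oplus X_0f$ and $(I_\cH+Y)(0\oplus g)=X_1g\oplus g$. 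Hence $I_\cH+Y$ maps $\cH_0$ bijectively onto the graph $\cG(\cH_0,X_0)$ of \eqref{G1} and $\cH_1$ bijectively onto $\cG(\cH_1,X_1)$ of \eqref{G2}, each partial map being injective because its argument is recovered from a single component.

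From this identification, bijectivity follows from complementarity in both directions. For an arbitrary element $f\oplus g\in\cH$ one has $(I_\cH+Y)(f\oplus g)=(f\oplus X_0f)+(X_1g\oplus g)$, so the range of $I_\cH+Y$ equals $\cG(\cH_0,X_0)+\cG(\cH_1,X_1)$, which is all of $\cH$; this gives surjectivity. For injectivity, if $(I_\cH+Y)(f\oplus g)=0$ then $f\oplus X_0f=-(X_1g\oplus g)$ lies in $\cG(\cH_0,X_0)\cap\cG(\cH_1,X_1)=\{0\}$, and inspecting the two components forces $f=0$ and $g=0$. Thus $I_\cH+Y$, and with it $I_\cH-Y$, is a bounded bijection, hence an automorphism.

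I do not expect a genuine obstacle here; the content is essentially the observation that $I_\cH+Y$ is precisely the parametrization map of the direct sum $\cG(\cH_0,X_0)+\cG(\cH_1,X_1)$. The one point to keep in mind is that completeness of $\cH$ is what upgrades the set-theoretic bijectivity to a bounded inverse, and that the complementarity hypothesis is used twice, namely the sum filling out $\cH$ for surjectivity and the trivial intersection for injectivity.
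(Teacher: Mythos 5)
Your proof is correct and follows essentially the same route as the paper's: both identify $I_\cH+Y$ as the parametrization map sending $\cH_0$ and $\cH_1$ bijectively onto the two graphs, deduce bijectivity of $I_\cH+Y$ from complementarity, and transfer the result to $I_\cH-Y$ via the conjugation identity $J(I_\cH-Y)J=I_\cH+Y$. The only difference is that you spell out the surjectivity and injectivity arguments and the appeal to the bounded inverse theorem, which the paper leaves implicit (and addresses separately in Remark \ref{rem:automorphism}).
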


\begin{remark}\label{rem:automorphism}
 \begin{enumerate}
  \item The notion `automorphism' in Lemma \ref{lem:automorphisms} can be understood either algebraically or topologically since by
        the closed graph theorem every bounded bijective operator on $\cH$ automatically has a bounded inverse.
  \item It is easy to see from the proof of Lemma \ref{lem:automorphisms} that also the converse statement of Lemma
        \ref{lem:automorphisms} is valid, that is, the graphs $\cG(\cH_0,X_0)$ and $\cG(\cH_1,X_1)$ are complementary if $I_\cH+Y$
        (resp.\ $I_\cH-Y$) is an automorphism of $\cH$. It is also worth noting that the latter will automatically be the case if
        $\norm{Y}<1$.
 \end{enumerate}
\end{remark}

The following proposition, the proof of which relies upon \eqref{eq:blockDiagStrong}, is the core of our proof of the equivalence
between the assertions i) and iii) of Theorem \ref{thm:main}.

\begin{proposition}\label{prop:extRel}
 Assume Hypothesis \ref{hyp:blockMatrix}. Let $\cG(\cH_0,X_0)$ and $\cG(\cH_1,X_1)$ with bounded linear operators
 $X_0\colon\cH_0\to\cH_1$ and $X_1\colon\cH_1\to\cH_0$, respectively, be two complementary graph subspaces, and suppose that both
 are invariant for $A+V$. Define $Y$ as in \eqref{eq:defY}. Then:
 \begin{enumerate}
  \item The domain $\dav=\Dom(A+V)$ splits as
        \begin{equation}\label{eq:domSplit2}
         \dav = \bigl(\dav\cap\cG(\cH_0,X_0)\bigr) + \bigl(\dav\cap\cG(\cH_1,X_1)\bigr)
        \end{equation}
        if and only if one has the extension relation
        \begin{equation}\label{eq:extRelSup}
         (I_\cH-Y)(A+V) \supset (A-YV)(I_\cH-Y)\,.
        \end{equation}
  \item The domain $\Dom(A+V)$ is invariant for $X_0$ and $X_1$ if and only if one has the extension relation
        \begin{equation}\label{eq:extRelSub}
         (I_\cH-Y)(A+V) \subset (A-YV)(I_\cH-Y)\,.
        \end{equation}
 \end{enumerate}

 \begin{proof}
  Since the graphs $\cG(\cH_0,X_0)$ and $\cG(\cH_1,X_1)$ are invariant for $A+V$ by hypothesis, the Riccati equation in the form
  \eqref{eq:blockDiagStrong} holds with the set $\cD$ given as in \eqref{eq:defD}. Moreover, since the operator matrix $Y$ is
  off-diagonal, this set $\cD$, similar to the domain $\dav=\Dom(A+V)$, splits as
  \[
   \cD = \bigl(\cD\cap\cH_0\bigr) + \bigl(\cD\cap\cH_1\bigr) =: \cD_0 + \cD_1\,.
  \]
  In particular, the block diagonal operator $J$ in \eqref{eq:defJ} maps both sets $\dav$ and $\cD$ onto themselves.
 
  (a). It is easy to see that $\dav\cap\cG(\cH_0,X_0)=\{f\oplus X_0f \mid f\in\cD_0\}$ and, similarly, that
  $\dav\cap\cG(\cH_1,X_1)=\{X_1g\oplus g \mid g\in\cD_1\}$. Hence,
  \begin{equation}\label{eq:graphSumRepr}
   \bigl(\dav\cap\cG(\cH_0,X_0)\bigr) + \bigl(\dav\cap\cG(\cH_1,X_1)\bigr) = \Ran\bigl((I_\cH+Y)|_\cD\bigr)\,.
  \end{equation}

  Suppose that $\dav$ splits as in \eqref{eq:domSplit2}. Equation \eqref{eq:graphSumRepr} then implies that
  $\dav=\Ran\bigl((I_\cH+Y)|_\cD\bigr)$. In view of the similarity \eqref{eq:Jsimilar} and the fact that $J$ maps $\cD$ and $\dav$
  onto themselves, this yields
  \[
   \dav = \Ran\bigl((I_\cH-Y)|_\cD\bigr)\,.
  \]
  Thus, the operator on the right-hand side of \eqref{eq:blockDiagStrong} has natural domain $\cD\subset\dav=\Dom(A+V)$, and,
  therefore, equation \eqref{eq:blockDiagStrong} agrees with the extension relation \eqref{eq:extRelSup}.
   
  Conversely, suppose that \eqref{eq:extRelSup} holds. Let $x\in\dav$ be arbitrary, and write $x=(I_\cH+Y)y$ with $y\in\cH$. In
  view of \eqref{eq:Jsimilar}, one has
  \[
   (I_\cH-Y)Jy = Jx\in\dav\,,
  \]
  and, therefore,
  \[
   Jy \in\Dom\bigl((A-YV)(I_\cH-Y)\bigr) \subset \Dom(A+V) = \dav\,.
  \]
  In turn, this yields $y\in\dav$. Rewriting the definition of $y$ as $Yy=x-y\in\dav$, one concludes that $y\in\cD$. Thus,
  $\dav\subset\Ran\bigl((I_\cH+Y)|_\cD\bigr)$, and the identity \eqref{eq:graphSumRepr} implies that $\dav$ splits as in
  \eqref{eq:domSplit2}.
 
  (b) First, observe that $\dav=\Dom(A+V)$ is invariant for $X_0$ and $X_1$ if and only if it is invariant for $Y$, that is,
  $\cD=\dav$. It therefore suffices to show that the extension relation \eqref{eq:extRelSub} holds if and only if $\cD=\dav$.
 
  Suppose that $\cD=\dav$. Then, equation \eqref{eq:blockDiagStrong} holds for all $x\in\dav=\Dom(A+V)$, which agrees with the
  extension relation \eqref{eq:extRelSub}.
 
  Conversely, suppose that \eqref{eq:extRelSub} holds. Since the operator on the left-hand side of \eqref{eq:extRelSub} has domain
  $\Dom(A+V)=\dav$, the operator $I_\cH-Y$ then maps $\dav$ into $\Dom(A-YV)=\dav$. Hence, $Y$ maps $\dav$ into itself, which means
  that $\cD=\dav$.
 \end{proof}%
\end{proposition}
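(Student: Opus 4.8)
The plan is to prove both equivalences in Proposition~\ref{prop:extRel} by carefully tracking how the block-diagonalization identity \eqref{eq:blockDiagStrong} relates to domain questions, keeping in mind that the left-hand operator $(I_\cH-Y)(A+V)$ always has domain $\dav$, whereas the right-hand operator $(A-YV)(I_\cH-Y)$ has the a priori smaller natural domain $\{x : (I_\cH-Y)x \in \dav\}$. First I would record the decisive structural fact that $Y$ is off-diagonal, so the set $\cD$ in \eqref{eq:defD} splits as $\cD = \cD_0 + \cD_1$ along $\cH_0 \oplus \cH_1$, and consequently the block-diagonal involution $J$ from \eqref{eq:defJ} maps both $\dav$ and $\cD$ onto themselves; this is what lets me convert freely between $I_\cH+Y$ and $I_\cH-Y$ via the similarity \eqref{eq:Jsimilar}. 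I would also note at the outset the elementary but central observation that invariance of $\dav$ for $X_0$ and $X_1$ is equivalent to invariance of $\dav$ for $Y$, i.e.\ to $\cD = \dav$.

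\medskip

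For part~(a) the key computation is identifying the sum of the two graph intersections with a range: since $\dav \cap \cG(\cH_0,X_0) = \{f \oplus X_0 f : f \in \cD_0\}$ and symmetrically for the other graph, their sum equals $\Ran\bigl((I_\cH+Y)|_\cD\bigr)$, which is \eqref{eq:graphSumRepr}. Granting this, the forward direction runs as follows: if $\dav$ splits as in \eqref{eq:domSplit2}, then $\dav = \Ran\bigl((I_\cH+Y)|_\cD\bigr)$, and applying $J$ together with \eqref{eq:Jsimilar} and the $J$-invariance of $\cD$, $\dav$ converts this to $\dav = \Ran\bigl((I_\cH-Y)|_\cD\bigr)$. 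This says exactly that the natural domain of $(A-YV)(I_\cH-Y)$ is $\cD \subset \dav$, so \eqref{eq:blockDiagStrong}, which already holds on $\cD$, becomes the extension relation \eqref{eq:extRelSup}. For the converse I would take an arbitrary $x \in \dav$, write it as $x = (I_\cH+Y)y$ using that $I_\cH+Y$ is an automorphism (Lemma~\ref{lem:automorphisms}), and then chase domains: $(I_\cH-Y)Jy = Jx \in \dav$ by \eqref{eq:Jsimilar} and $J$-invariance of $\dav$, so $Jy$ lies in $\Dom\bigl((A-YV)(I_\cH-Y)\bigr) \subset \dav$ by \eqref{eq:extRelSup}, whence $y \in \dav$ and therefore $Yy = x - y \in \dav$, i.e.\ $y \in \cD$; this gives $\dav \subset \Ran\bigl((I_\cH+Y)|_\cD\bigr)$ and the reverse inclusion is automatic, so \eqref{eq:graphSumRepr} closes the argument.

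\medskip

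For part~(b) the argument is shorter once the reduction to $\cD = \dav$ is in place. If $\cD = \dav$, then \eqref{eq:blockDiagStrong} holds on all of $\dav = \Dom(A+V)$, and since the two sides agree there this is precisely the extension relation \eqref{eq:extRelSub}. Conversely, if \eqref{eq:extRelSub} holds, then the left-hand operator has domain $\dav$, so this domain must be contained in the domain of the right-hand operator, forcing $(I_\cH-Y)\dav \subset \Dom(A-YV) = \dav$; hence $Y$ maps $\dav$ into itself and $\cD = \dav$.

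\medskip

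I expect the main obstacle to be the careful bookkeeping of domains rather than any deep analytic estimate. The subtle point, which I would want to state explicitly, is that the operator $(A-YV)(I_\cH-Y)$ is \emph{defined} on its natural domain $\{x : (I_\cH-Y)x \in \dav\}$, and the whole content of part~(a) is that this natural domain is forced to coincide with $\cD$ exactly when the graph intersections span $\dav$; the similarity \eqref{eq:Jsimilar} combined with the $J$-invariance of $\dav$ and $\cD$ is the mechanism that interchanges the roles of $I_\cH\pm Y$ and must be invoked at the right moment. I would be careful not to conflate the two extension relations \eqref{eq:extRelSup} and \eqref{eq:extRelSub}: one constrains the range of $I_\cH-Y$ restricted to $\cD$, the other constrains where $I_\cH-Y$ sends $\dav$, and conflating them is exactly the error that would make the domain-splitting and angular-operator-invariance conditions look equivalent when in general they need not be.
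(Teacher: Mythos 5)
Your proof is correct and takes essentially the same route as the paper's own argument: the range identity \eqref{eq:graphSumRepr}, the use of the $J$-similarity \eqref{eq:Jsimilar} to pass between $I_\cH+Y$ and $I_\cH-Y$, the domain chase in the converse of part (a), and the reduction of part (b) to the condition $\cD=\dav$ all match the paper's proof step for step. No gaps to report.
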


In view of Proposition \ref{prop:extRel}, the proof of the equivalence i) $\Leftrightarrow$ iii) in Theorem \ref{thm:main} reduces
to the following statement: Under the assumptions of Theorem \ref{thm:main}, each of the extension relations \eqref{eq:extRelSup}
and \eqref{eq:extRelSub} implies the other and therefore the operator equality
\begin{equation}\label{eq:blockSimilar}
 (I_\cH-Y)(A+V) = (A-YV)(I_\cH-Y)\,
\end{equation}
holds.

In order to show the latter statement, we need the following elementary observation.

\begin{lemma}[{\cite[Lemma 1.3]{Schm12}}]\label{lem:schm}
 Let $\cT$ and $\cS$ be linear operators such that $\cS\subset\cT$. If $\cS$ is surjective and $\cT$ is injective, then $\cS=\cT$.
 
 \begin{proof}
  For the sake of completeness, we reproduce the proof from \cite{Schm12}.

  Let $y\in\Dom(\cT)$ be arbitrary. Since $\cS$ is surjective, there is $x\in\Dom(\cS) \subset\Dom(\cT)$ such that
  $\cT y=\cS x=\cT x$, where we have taken into account that $\cS\subset\cT$. The injectivity of $\cT$ now implies that
  $y=x\in\Dom(\cS)$. Thus, $\Dom(\cT)=\Dom(\cS)$ and, hence, $\cS=\cT$.
 \end{proof}%
\end{lemma}

We are now ready to prove Theorem \ref{thm:main}.

\begin{proof}[Proof of i) $\Leftrightarrow$ iii) in Theorem \ref{thm:main}]
 Since the two graphs $\cG(\cH_0,X_0)$ and $\cG(\cH_1,X_1)$ are complementary subspaces, the operator $I_\cH-Y$ is an automorphism
 of $\cH$ by Lemma \ref{lem:automorphisms}. Hence, the operators $(I_\cH-Y)(A+V-\lambda)$ and $(A-YV-\lambda)(I_\cH-Y)$ are both
 bijective. 
 
 The claim then is an immediate consequence of Proposition \ref{prop:extRel} and Lemma \ref{lem:schm} since every extension
 relation between the operators $(I_\cH-Y)(A+V)$ and $(A-YV)(I_\cH-Y)$ directly translates to the corresponding one between
 $(I_\cH-Y)(A+V-\lambda)$ and $(A-YV-\lambda)(I_\cH-Y)$ and vice versa.
\end{proof}%

\begin{remark}\label{rem:closed}
 Although Theorem \ref{thm:main} has been stated for closed operators $A+V$ and $A-YV$, the above proof shows that $A+V-\lambda$
 and $A-YV-\lambda$ only need to be bijective. In fact, a closer inspection of the proof shows that for each implication only the
 injectivity of one of the operators and the surjectivity of the other is required. Namely, in the proof of the implication i)
 $\Rightarrow$ iii), we only use that $A+V-\lambda$ is injective and $A-YV-\lambda$ is surjective. For the converse, only
 surjectivity of $A+V-\lambda$  and injectivity of $A-YV-\lambda$ is used, cf.\ also Remark \ref{rem:injsursingle} below.
\end{remark}

As a byproduct of our considerations, the operator equality \eqref{eq:blockSimilar} can be rewritten as
\begin{equation}\label{eq:blockDiag2ndMain}
 (I_\cH-Y) (A+V) (I_\cH-Y)^{-1} = A-YV = \begin{pmatrix} A_0-X_1W_0 & 0\\ 0 & A_1-X_0W_1 \end{pmatrix},
\end{equation}
so that the spectral identity 
\begin{equation}\label{eq:specid1}
\spec (A+V)=\spec( A_0-X_1W_0) \cup  \spec (A_1-X_0W_1)
\end{equation}
holds. In fact, the restrictions of $A+V$ to the graphs $\cG(\cH_0,X_0)$ and $\cG(\cH_1,X_1)$ are similar to the diagonal entries
$A_0-X_1W_0$ and $A_1-X_0W_1$, respectively. In order to see this, observe that
\[
 (I_\cH-Y)(I_\cH+Y) = I_\cH-Y^2 = \begin{pmatrix} I_{\cH_0}-X_1X_0 & 0\\ 0 & I_{\cH_1}-X_0X_1 \end{pmatrix}.
\]
Taking into account Lemma \ref{lem:automorphisms}, equation \eqref{eq:blockDiag2ndMain} then turns into
\begin{equation}\label{eq:blockDiagExt}
 (I_\cH+Y)^{-1} (A+V) (I_\cH+Y) = (I_\cH-Y^2)^{-1} (A-YV) (I_\cH-Y^2)\,,
\end{equation}
where the right-hand side is again block diagonal with entries similar to the diagonal entries of $A-YV$. Since $I_\cH+Y$ maps
$\cH_0$ and $\cH_1$ bijectively onto the graphs $\cG(\cH_0,X_0)$ and $\cG(\cH_1,X_1)$, respectively, this proves the claim for the
parts of $A+V$.

It is a direct consequence of Proposition \ref{prop:extRel} that the block diagonalization \eqref{eq:blockDiag2ndMain}
(resp.\ \eqref{eq:blockDiagExt}) holds as soon as the pair of graphs $\cG(\cH_0,X_0)$ and $\cG(\cH_1,X_1)$ decomposes the operator
$A+V$ and $\Dom(A+V)$ is invariant for both $X_0$ and $X_1$. In turn, the operators $A+V-\lambda$ and $A-YV-\lambda$ then are
bijective for the same constants $\lambda$. In this respect, if $A+V$ is closed with non-empty resolvent set, the hypothesis of
intersecting resolvent sets in Theorem \ref{thm:main} is not only sufficient but also necessary for the equivalence
i) $\Leftrightarrow$ iii) to hold.

\begin{remark}\label{rem:alternative}
 Similar to \eqref{eq:blockDiagStrong}, equation \eqref{eq:Ricc} can alternatively be rewritten as
 \begin{equation}\label{eq:qsimilar(A+V)T}
  (A+V)(I_\cH+Y)x = (I_\cH+Y)(A+VY)x\quad\text{ for }\quad x\in\cD\,,
 \end{equation}
 with the block diagonal operator
 \[
  A+VY = \begin{pmatrix} A_0+W_1X_0 & 0\\ 0 & A_1+W_0X_1 \end{pmatrix}.
 \]
 In particular, if \eqref{eq:blockDiagExt} holds, one has
 \begin{equation}\label{eq:diagSimilar}
  (I_\cH-Y^2)^{-1} (A-YV) (I_\cH-Y^2)x = (A+VY)x \quad\text{for}\quad x\in\cD\,.
 \end{equation}
 Block diagonalizations based on \eqref{eq:qsimilar(A+V)T} have already been considered in the literature, see, e.g.,
 \cite{ADL01,AMM03,Cue12,LT97,Tre08}, whereas \eqref{eq:blockDiag2ndMain} and \eqref{eq:blockDiagExt}, to the best of our
 knowledge, appear only in the present work and to some extent in the authors (unpublished) preprint \cite{MSS13} and seem to be new.

 The obvious difference between \eqref{eq:blockSimilar} and \eqref{eq:qsimilar(A+V)T} is the appearance of the block diagonal
 operators $A-YV$ and $A+VY$, respectively, where in the former case the operator $Y$ stands to the left of $V$, in the latter one
 it stands to the right of $V$. In particular, the operator $A-YV$ always has stable domain $\Dom(A-YV)=\Dom(A+V)$, whereas the
 natural domain of $A+VY$ may depend on $Y$. More precisely, the domain of $A+VY$ only satisfies
 \[
  \cD \subset \Dom(A+VY) \subset \Dom(A)\,,
 \]
 and either one of the inclusions may a priori be strict. Moreover, neither of the inclusions $\Dom(A+VY)\subset\Dom(A+V)$ and
 $\Dom(A+V)\subset\Dom(A+VY)$ is self-evident. In view of \eqref{eq:diagSimilar}, the block diagonalization \eqref{eq:blockDiagExt}
 therefore seems to extend \eqref{eq:qsimilar(A+V)T}.

 However, in the particular case where the off-diagonal part $V$ satisfies the additional requirement that $\Dom(A)\subset\Dom(V)$,
 equation \eqref{eq:blockDiag2ndMain} implies that
 \[
  \cD=\Dom(A+V)=\Dom(A)=\Dom(A+VY)\,.
 \]
 It is then easy to see from \eqref{eq:diagSimilar} that the right-hand side of \eqref{eq:blockDiagExt} actually agrees with
 $A+VY$, so that
 \begin{equation}\label{eq:blockDiag1stMain}
  (I_\cH+Y)^{-1}(A+V)(I_\cH+Y) = A+VY=\begin{pmatrix} A_0+W_1X_0 & 0\\ 0 & A_1+W_0X_1 \end{pmatrix}.
 \end{equation} 
 In particular, the restrictions of $A+V$ to the graphs $\cG(\cH_0,X_0)$ and $\cG(\cH_1,X_1)$ are similar to the diagonal blocks
 $A_0+W_1X_0$ and $A_1+W_0X_1$, respectively, and one has
 \begin{equation}\label{eq:specid2}
  \spec (A+V)=\spec(A_0+W_1X_0 ) \cup \spec ( A_1+W_0X_1)\,.
 \end{equation}
 Some further discussions on this matter can be found in Section 2.6 of the authors preprint \cite{MSS13}.

 We revisit the case $\Dom(A)\subset\Dom(V)$ in Sections \ref{sec:app} and \ref{sec:Example} below.
\end{remark}

\section{The second Proof of Theorem \ref{thm:main}. The single graph subspace approach}\label{sec:general}

The property of a pair of graph subspaces to decompose an operator clearly involves both subspaces simultaneously, whereas the
invariance of the domain for the corresponding angular operators is definitely a property of the separate graphs. At this point,
Lemma \ref{lem:resInv} provides a natural substitute for the splitting property of the operator domain by the invariance of the
separate graphs for the resolvent of the operator. This allows one to deal with each graph separately.

In this section, we therefore concentrate on single graph subspaces. For definiteness, we state the results for graphs with respect
to $\cH_0$. The corresponding results for graphs with respect to $\cH_1$ can be obtained just by switching the roles of $\cH_0$ and
$\cH_1$. In the alternative proof of Theorem \ref{thm:main}, the results are then applied to each of the two graphs
$\cG(\cH_0,X_0)$ and $\cG(\cH_1,X_1)$.

We begin with the following lemma, which is nothing but a corollary to Lemma \ref{lem:invariance}.

\begin{lemma}\label{lem:triDiag}
 Assume Hypothesis \ref{hyp:blockMatrix}, and let $X_0\colon\cH_0\to\cH_1$ be a bounded linear operator such that its graph
 $\cG(\cH_0,X_0)$ is invariant for $A+V$. If, in addition, $\dav=\Dom(A+V)$ is invariant for $X_0$, then one has the operator
 identity
 \begin{equation}\label{eq:triDiag}
  \begin{pmatrix} I_{\cH_0}&0\\ -X_0&I_{\cH_1} \end{pmatrix} (A+V) \begin{pmatrix} I_{\cH_0}&0\\ X_0&I_{\cH_1}\end{pmatrix} =
  \begin{pmatrix} (A_0+W_1X_0)|_{\dav_0} & W_1\\ 0 & A_1-X_0W_1 \end{pmatrix}.
 \end{equation}
 In particular, the restriction of $A+V$ to $\cG(\cH_0,X_0)$ is similar to $(A_0+W_1X_0)|_{\dav_0}$, namely
 \begin{equation}\label{eq:similar}
  T^{-1}(A+V)T = (A_0+W_1X_0)|_{\dav_0}\,,
 \end{equation}
 where $T\colon\cH_0\to\cG(\cH_0,X_0)$ is given by
 \[
  Tx := x\oplus X_0x\,.
 \]

 \begin{proof}
  If $\dav$ is invariant for $X_0$, that is, if $X_0$ maps $\mathfrak{D}_0$ into $\dav_1$, then the operator on the right-hand side
  of \eqref{eq:triDiag} is well-defined with natural domain $\Dom(A+V)=\dav_0\oplus\dav_1$ and the bijective operator
  $\begin{pmatrix} I_{\cH_0} & 0\\ X_0 & I_{\cH_1} \end{pmatrix}$ maps $\Dom(A+V)$ onto itself. The rest follows by direct
  computation using the Riccati equation \eqref{eq:RiccatiX0} from Lemma \ref{lem:invariance}; cf.\ also Remark \ref{rem:strong}.
 \end{proof}%
\end{lemma}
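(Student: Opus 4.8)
The plan is to verify the matrix identity \eqref{eq:triDiag} by a direct block computation, with the sole nontrivial input being the Riccati equation \eqref{eq:RiccatiX0}, and then to read off the similarity \eqref{eq:similar} as a corollary. First I would confirm that both sides of \eqref{eq:triDiag} are operators with the same natural domain. The assumption that $\dav$ is invariant for $X_0$ means exactly that $X_0$ maps $\dav_0$ into $\dav_1$ (invoking the identification of $X_0$ with its trivial continuation, as in Remark \ref{rem:strong}); consequently the lower-triangular factor $\begin{pmatrix} I_{\cH_0} & 0\\ X_0 & I_{\cH_1}\end{pmatrix}$ maps $\dav_0\oplus\dav_1$ bijectively onto itself, with inverse $\begin{pmatrix} I_{\cH_0} & 0\\ -X_0 & I_{\cH_1}\end{pmatrix}$. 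At the same time, the claimed right-hand side has natural domain $\dav_0\oplus\dav_1 = \Dom(A+V)$, because the off-diagonal entry $W_1$ acts on the second component $\dav_1$, the corner $(A_0+W_1X_0)|_{\dav_0}$ is by fiat restricted to $\dav_0$, and the entry $A_1 - X_0 W_1$ is defined on $\dav_1$. So the two sides are defined on the same set, and it suffices to check that they agree pointwise.

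Next I would compute the left-hand side applied to a general vector $f\oplus g\in\dav_0\oplus\dav_1$. Applying the lower-triangular factor sends $f\oplus g$ to $f\oplus(X_0 f + g)$, which lies in $\dav$ by the invariance just discussed. Applying $A+V$ in its block form \eqref{eq:blockOp} gives
\[
 \begin{pmatrix} A_0 f + W_1(X_0 f + g)\\ W_0 f + A_1(X_0 f + g)\end{pmatrix}
 = \begin{pmatrix} (A_0 + W_1 X_0)f + W_1 g\\ (W_0 + A_1 X_0)f + A_1 g\end{pmatrix}.
\]
Finally, applying the upper-triangular factor $\begin{pmatrix} I_{\cH_0} & 0\\ -X_0 & I_{\cH_1}\end{pmatrix}$ leaves the first component untouched and replaces the second component by $(W_0 + A_1 X_0)f + A_1 g - X_0\bigl((A_0+W_1 X_0)f + W_1 g\bigr)$. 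The first component is precisely $(A_0+W_1X_0)f + W_1 g$, matching the top row of the target. The crux is the second component: I would collect the coefficient of $f$ as $\bigl(W_0 + A_1 X_0 - X_0 A_0 - X_0 W_1 X_0\bigr)f$, which vanishes for every $f\in\dav_0$ exactly by the Riccati equation \eqref{eq:RiccatiX0} — here I use the hypothesis that the graph is invariant \emph{and} that $\dav$ is $X_0$-invariant, so that \eqref{eq:RiccatiX0} holds on all of $\dav_0$ (the strong-solution situation of Remark \ref{rem:strong}). The surviving coefficient of $g$ is $(A_1 - X_0 W_1)g$, which is the bottom-right entry of the target matrix. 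This establishes \eqref{eq:triDiag}.

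The remaining obstacle, which I regard as minor, is the bookkeeping needed to deduce \eqref{eq:similar} from \eqref{eq:triDiag}. Writing $S := \begin{pmatrix} I_{\cH_0} & 0\\ X_0 & I_{\cH_1}\end{pmatrix}$, identity \eqref{eq:triDiag} reads $S^{-1}(A+V)S = \begin{pmatrix} (A_0+W_1X_0)|_{\dav_0} & W_1\\ 0 & A_1 - X_0 W_1\end{pmatrix}$. Since $S$ maps $\cH_0$ (as the first summand) onto $\cG(\cH_0,X_0)$ via $f\mapsto f\oplus X_0 f$, and since the right-hand side is block upper-triangular, the restriction of $A+V$ to the invariant graph $\cG(\cH_0,X_0)$ corresponds, under the identification $T\colon f\mapsto f\oplus X_0 f$, exactly to the upper-left corner $(A_0 + W_1 X_0)|_{\dav_0}$; the off-diagonal block $W_1$ does not interfere because it maps into the complementary summand and the lower-left block is zero. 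Concretely, for $f\in\dav_0$ one has $T^{-1}(A+V)Tf = T^{-1}(A+V)(f\oplus X_0 f)$, and unwinding this through the computation above yields $(A_0 + W_1 X_0)f$, giving \eqref{eq:similar}. I expect no serious difficulty here beyond making explicit that $T$ is the natural isomorphism onto the graph and that the triangular structure isolates the diagonal corner.
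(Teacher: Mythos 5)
Your proposal is correct and follows essentially the same route as the paper's proof: you check that the invariance of $\dav$ for $X_0$ makes both sides of \eqref{eq:triDiag} well-defined on $\dav_0\oplus\dav_1$ with the triangular factor mapping this domain onto itself, and then carry out the direct block computation in which the Riccati equation \eqref{eq:RiccatiX0} (valid on all of $\dav_0$ by Remark \ref{rem:strong}) kills the lower-left entry. The only difference is that you write out explicitly the computation and the bookkeeping for \eqref{eq:similar} that the paper leaves to the reader.
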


It is interesting to note that in the block diagonal representations \eqref{eq:blockDiag1stMain} and \eqref{eq:blockDiag2ndMain},
the $X$- and $W$-operators are ordered `alphabetically' and `reversed alphabetically', respectively, while in the block triangular
representation \eqref{eq:triDiag} the order of the corresponding operators in the diagonal entries is mixed.

The following theorem, the proof of which relies upon the identity \eqref{eq:triDiag} in Lemma \ref{lem:triDiag}, represents the
core of our considerations for single graphs.

\begin{theorem}\label{thm:single}
 Assume Hypothesis \ref{hyp:blockMatrix}, and let $X_0\colon\cH_0\to\cH_1$ be a bounded linear operator such that its graph
 $\cG(\cH_0,X_0)$ is invariant for $A+V$. Suppose, in addition, that for some constant $\lambda$ the operators $A+V-\lambda$ and
 $A_1-\lambda-X_0W_1$ are bijective.

 Then, the graph $\cG(\cH_0,X_0)$ is invariant for $(A+V-\lambda)^{-1}$ if and only if $\Dom(A+V)$ is invariant for $X_0$.

 \begin{proof}
  Suppose that $\cG(\cH_0,X_0)$ is invariant for $(A+V-\lambda)^{-1}$. We need to show that $X_0$ maps $\dav_0$ into $\dav_1$. To
  this end, let $f\in\dav_0$ be arbitrary. Since $A_1-\lambda-X_0W_1$ is surjective, there is $g\in\dav_1=\Dom(A_1-X_0W_1)$ such
  that
  \[
   (A_1-\lambda-X_0W_1)g = X_0(A_0-\lambda)f - W_0f\,.
  \]
  This can be rewritten as
  \[
   (A_1-\lambda)g + W_0f = X_0\bigl((A_0-\lambda)f + W_1g\bigr)\,,
  \]
  so that
  \[
   (A+V-\lambda)\begin{pmatrix} f\\ g \end{pmatrix} = \begin{pmatrix} (A_0-\lambda)f + W_1g\\ (A_1-\lambda)g + W_0f \end{pmatrix}
   = \begin{pmatrix} (A_0-\lambda)f + W_1g\\ X_0\bigl((A_0-\lambda)f + W_1g\bigr) \end{pmatrix} \in\cG(\cH_0,X_0)\,.
  \]
  Since $\cG(\cH_0,X_0)$ is invariant for $(A+V-\lambda)^{-1}$, one concludes that $f\oplus g\in\cG(\cH_0,X_0)$, that is,
  $X_0f=g\in\dav_1$. This shows that $X_0$ maps $\dav_0$ into $\dav_1$, that is, $\Dom(A+V)$ is invariant for the operator $X_0$.

  Conversely, suppose that $\Dom(A+V)$ is invariant for $X_0$. Let $x=x_0\oplus X_0x_0\in\cG(\cH_0,X_0)$ be arbitrary. Since
  $A+V-\lambda$ is surjective, there is $f\oplus g\in\dav_0\oplus\dav_1$ such that
  \[
   \begin{pmatrix} x_0\\ X_0x_0 \end{pmatrix} = (A+V-\lambda) \begin{pmatrix} f\\ g\end{pmatrix}.
  \]
  We have to show that $f\oplus g\in\cG(\cH_0,X_0)$, that is, $g=X_0f$.

  Clearly,
  \[
   \begin{aligned}
    \begin{pmatrix} x_0\\ 0 \end{pmatrix}
    &= \begin{pmatrix} I_{\cH_0} & 0\\ -X_0 & I_{\cH_1} \end{pmatrix} \begin{pmatrix} x_0\\ X_0x_0 \end{pmatrix}
     = \begin{pmatrix} I_{\cH_0} & 0\\ -X_0 & I_{\cH_1} \end{pmatrix}(A+V-\lambda) \begin{pmatrix} f\\ g\end{pmatrix}\\
    &= \begin{pmatrix} I_{\cH_0} & 0\\ -X_0 & I_{\cH_1} \end{pmatrix}(A+V-\lambda)
       \begin{pmatrix} I_{\cH_0} & 0\\ X_0 & I_{\cH_1} \end{pmatrix} \begin{pmatrix} f\\ g-X_0f \end{pmatrix}.
   \end{aligned}
  \]
  Since $\Dom(A+V)=\dav_0\oplus\dav_1$ is invariant for $X_0$, one has $g-X_0f\in\dav_1$. The identity \eqref{eq:triDiag} in Lemma
  \ref{lem:triDiag} therefore yields
  \[
   \begin{aligned}
    \begin{pmatrix} x_0\\ 0\end{pmatrix}
    &= \begin{pmatrix} (A_0-\lambda+W_1X_0)|_{\mathfrak{D}_0} & W_1\\ 0 & A_1-\lambda-X_0W_1 \end{pmatrix}
       \begin{pmatrix} f\\ g-X_0f \end{pmatrix}\\
    &= \begin{pmatrix} (A_0-\lambda+W_1X_0)f + W_1(g-X_0f)\\ (A_1-\lambda-X_0W_1)(g-X_0f) \end{pmatrix}.
   \end{aligned}
  \]
  Hence, $(A_1-\lambda-X_0W_1)(g-X_0f)=0$, and the injectivity of $A_1-\lambda-X_0W_1$ yields $g=X_0f$. Thus,
  $(A+V-\lambda)^{-1}x=f\oplus X_0f\in\cG(\cH_0,X_0)$, which shows that $\cG(\cH_0,X_0)$ is invariant for $(A+V-\lambda)^{-1}$.
 \end{proof}%
\end{theorem}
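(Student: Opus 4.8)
The plan is to establish the two implications of the equivalence separately. The converse implication (invariance of $\dav$ for $X_0$ $\Rightarrow$ invariance of the graph for the resolvent) will rest on the block triangular factorization of Lemma \ref{lem:triDiag}, which is itself a consequence of the Riccati equation \eqref{eq:RiccatiX0}; the forward implication is instead a more direct surjectivity argument that does not invoke the factorization.

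For the forward direction, I would fix $f\in\dav_0$ and try to exhibit a vector $f\oplus g\in\dav$ whose image under $A+V-\lambda$ lands in $\cG(\cH_0,X_0)$. The idea is to choose $g\in\dav_1$ so that the lower component of $(A+V-\lambda)(f\oplus g)$ equals $X_0$ applied to its upper component. Writing this requirement out turns it into the single equation $(A_1-\lambda-X_0W_1)g = X_0(A_0-\lambda)f - W_0f$ in $\cH_1$, whose right-hand side is meaningful because $X_0$ is bounded (under the trivial continuation to $\cH$) and $f\in\dav_0\subset\Dom(A_0)\cap\Dom(W_0)$. Surjectivity of $A_1-\lambda-X_0W_1$ is exactly what furnishes such a $g$. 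By construction $(A+V-\lambda)(f\oplus g)\in\cG(\cH_0,X_0)$, so invariance of the graph for $(A+V-\lambda)^{-1}$ forces $f\oplus g\in\cG(\cH_0,X_0)$, that is $g=X_0f$; in particular $X_0f=g\in\dav_1$, which is the desired invariance of $\dav$ for $X_0$.

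For the converse I would invoke Lemma \ref{lem:triDiag}, now legitimately, since $\dav$ is assumed invariant for $X_0$. Given $x=x_0\oplus X_0x_0\in\cG(\cH_0,X_0)$, surjectivity of $A+V-\lambda$ produces $f\oplus g\in\dav$ with $(A+V-\lambda)(f\oplus g)=x$, and the goal is $g=X_0f$. Applying the lower unitriangular factor $\begin{pmatrix} I_{\cH_0}&0\\ -X_0&I_{\cH_1}\end{pmatrix}$ to $x$ collapses it to $x_0\oplus 0$, while inserting $f\oplus g=\begin{pmatrix} I_{\cH_0}&0\\ X_0&I_{\cH_1}\end{pmatrix}(f\oplus(g-X_0f))$ and using \eqref{eq:triDiag} reduces the lower component to $(A_1-\lambda-X_0W_1)(g-X_0f)=0$. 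Injectivity of $A_1-\lambda-X_0W_1$ then yields $g=X_0f$, so that $(A+V-\lambda)^{-1}x=f\oplus X_0f\in\cG(\cH_0,X_0)$.

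I expect the main obstacle to be the forward implication, specifically the reverse-engineering of the defining equation for $g$: one has to recognize which equation makes the pair $(f,g)$ map into the graph and then observe that its solvability is precisely surjectivity of $A_1-\lambda-X_0W_1$. In the converse, the only delicate point is verifying $g-X_0f\in\dav_1$, which hinges on the standing invariance of $\dav$ for $X_0$ and is what makes the factorization \eqref{eq:triDiag} applicable; everything else there is routine matrix bookkeeping.
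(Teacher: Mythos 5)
Your proposal is correct and follows essentially the same route as the paper's proof: the forward direction constructs $g$ via surjectivity of $A_1-\lambda-X_0W_1$ from exactly the same equation $(A_1-\lambda-X_0W_1)g = X_0(A_0-\lambda)f - W_0f$ and then invokes invariance of the graph for the resolvent, while the converse applies the triangular factorization \eqref{eq:triDiag} of Lemma \ref{lem:triDiag} and injectivity of $A_1-\lambda-X_0W_1$ to conclude $g=X_0f$. You also correctly identified the one delicate point in the converse, namely that $g-X_0f\in\dav_1$ requires the assumed invariance of $\dav$ for $X_0$, which is precisely what the paper relies on.
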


\begin{remark}\label{rem:injsursingle}
 In the situation of Theorem \ref{thm:single}, the invariance of $\cG(\cH_0,X_0)$ for $(A+V-\lambda)^{-1}$ is clearly equivalent
 to the identity
 \begin{equation}\label{eq:surjective}
  \Ran\bigl((A+V-\lambda)|_{\Dom(A+V)\cap\cG(\cH_0,X_0)}\bigr) = \cG(\cH_0,X_0)\,.
 \end{equation}

 Upon this observation, the proof of Theorem \ref{thm:single} actually shows the following more general statements (cf. Remark
 \ref{rem:closed} for the corresponding observation in the block approach) where the operators $A+V-\lambda$ and
 $A_1-\lambda-X_0W_1$ are not assumed to be bijective:
 \begin{enumerate}
  \renewcommand{\theenumi}{\alph{enumi}}
  \item Suppose that \eqref{eq:surjective} holds. Then, $\Dom(A+V)$ is invariant for $X_0$ if $A+V-\lambda$ is injective and
        $A_1-\lambda-X_0W_1$ is surjective.
  \item Let $\Dom(A+V)$ be invariant for $X_0$. Then \eqref{eq:surjective} holds if $A+V-\lambda$ is surjective and
        $A_1-\lambda-X_0W_1$ is injective.
 \end{enumerate}

 However, if $A+V-\lambda$ is bijective, then the bijectivity of $A_1-\lambda-X_0W_1$ is a natural requirement in Theorem
 \ref{thm:single}. Indeed, if \eqref{eq:surjective} holds and $\Dom(A+V)$ is invariant for $X_0$, then it follows from the
 identities \eqref{eq:triDiag} and \eqref{eq:similar} in Lemma \ref{lem:triDiag} that along with $A+V-\lambda$ the operators
 $(A_0-\lambda+W_1X_0)|_{\dav_0}$ and $A_1-\lambda-XW_1$ are bijective. Hence, in both cases (a) and (b) the operator
 $A_1-\lambda-XW_1$ turns out to be bijective in the end as well.
\end{remark}

We now turn to the second proof of Theorem \ref{thm:main}.

\begin{proof}[Direct proof of ii) $\Leftrightarrow$ iii) in Theorem \ref{thm:main}] 
 First, observe that the bijectivity of the block diagonal operator
 \[
  A-\lambda-YV = \begin{pmatrix} A_0-\lambda-X_1W_0 & 0\\ 0 & A_1-\lambda-X_0W_1 \end{pmatrix}
 \]
 means bijectivity of both $A_0-\lambda-X_1W_0$ and $A_1-\lambda-X_0W_1$. Hence, Theorem \ref{thm:single} can be applied to each
 graph $\cG(\cH_0,X_0)$ and (with roles of $\cH_0$ and $\cH_1$ switched) $\cG(\cH_1,X_1)$. This yields that the graphs
 $\cG(\cH_0,X_0)$ and $\cG(\cH_1,X_1)$ are invariant for $(A+V-\lambda)^{-1}$ if and only if $\Dom(B)=\Dom(A+V)$ is invariant for
 both $X_0$ and $X_1$.
\end{proof}%

The statement that invariance of $\cG(\cH_0,X_0)$ for $A+V$ implies invariance of $\Dom(A+V)$ for $X_0$ has already been discussed
under certain assumptions in \cite[Theorem 4.1]{LT97}, \cite[Lemma 6.1]{TW14}, and \cite[Proposition 7.5]{W11}. Those results
appear to be particular cases of Theorem \ref{thm:single} above, cf.\ also  Section \ref{sec:app} below. Moreover, Theorem
\ref{thm:single} also provides the converse statement, which, to the best of the authors knowledge, has not been discussed in the
literature before. Also the block triangular representation \eqref{eq:triDiag} seems to be new in this general setting.

However, the triangular representation \eqref{eq:triDiag} for closed operators $A+V$ is in general not sufficient to ensure the
spectral identity
\[
 \spec(A+V) = \spec\bigl((A_0+W_1X_0)|_{\dav_0}\bigr) \cup \spec(A_1-X_0W_1)\,.
\]
The reason is that the invariance of $\cG(\cH_0,X_0)$ for the inverse $(A+V-\lambda)^{-1}$ depends on $\lambda$, and it may happen
that $\cG(\cH_0,X_0)$ is invariant only for some $\lambda$ but not necessarily for all, even if $\Dom(A+V)$ is invariant for $X_0$.
This is illustrated in the following example in the setting of bounded block matrices $B=A+V$:

\begin{remark}
 For $\cH_0=\cH_1=\ell^2(\N)$, the Hilbert space of square-summable sequences, let $A_0$ be the right-shift on $\ell^2(\N)$ and
 $A_1$ its adjoint. Choose $W_1$ to map the one-dimensional kernel of $A_1$ onto the orthogonal complement of $\Ran A_0$ with
 trivial continuation to a bounded operator $W_1\colon \cH_1\to\cH_0$. Finally, let $W_0=X=0$, so that on $\cH=\cH_0\oplus\cH_1$ we
 consider the bounded block triangular matrix
 \[
  B = \begin{pmatrix} A_0 & W_1\\ 0 & A_1 \end{pmatrix}.
 \]

 It is easy and straightforward to show that the operator matrix $B$ is bijective, although the diagonal blocks $A_0$ and $A_1$ are
 not. In particular, $\cH_0=\cG(\cH_0,0)$ is invariant for $B$ but not for the inverse $B^{-1}$. However, since $A_0$ and $A_1$ are
 bounded, there is some constant $\lambda$ such that $A_0-\lambda$ and $A_1-\lambda$ are bijective, and so is $B-\lambda$. In this
 case, $\cH_0$ is invariant for both $B-\lambda$ and the inverse $(B-\lambda)^{-1}$.
\end{remark}

Matters change as soon as the invariant graph $\cG(\cH_0,X_0)$ has some nice complementary subspace $\cV$ in the sense that for
\emph{some} $\lambda$ such that $A+V-\lambda$ is bijective both subspaces $\cG(\cH_0,X_0)$ and $\cV$ are invariant for $A+V$ and
$(A+V-\lambda)^{-1}$. Indeed, in this case, the pair $\cG(\cH_0,X_0)$ and $\cV$ decomposes the operator $A+V$ by Lemma
\ref{lem:resInv}. In turn, the graph is invariant for \emph{every} inverse $(A+V-\lambda)^{-1}$, independent of $\lambda$. If, in
addition, $A+V$ is closed, the triangular representation \eqref{eq:triDiag} then implies that $(A_0+W_1X_0)|_{\dav_0}$ and
$A_1-X_0W_1$ are closed as well and the spectral identity
\[
 \spec(A+V) = \spec\bigl((A_0+W_1X_0)|_{\dav_0}\bigr) \cup \spec(A_1-X_0W_1)
\]
takes place (cf.\ \eqref{eq:specid1} and \eqref{eq:specid2}).

In this observation, $\cV$ may be a graph with respect to $\cH_1$ as in Theorem \ref{thm:main} or, like $\cG(\cH_0,X_0)$ is, a
graph with respect to $\cH_0$ as well. The latter case has been discussed to some extend in \cite{TW14}. However, the above
reasoning does not require the subspace $\cV$ to be a graph at all, which makes this situation much more general.

\section{Relatively bounded perturbations}\label{sec:app}

In this section, we discuss a priori assumptions on the diagonal operator $A$ and the off-diagonal perturbation $V$ which guarantee
that the resolvent sets of $A+V$ and $A-YV$ are not disjoint and, therefore, Theorem \ref{thm:main} can be applied. Here, we
consider only the case where $A$ is closed and $V$ is relatively bounded with respect to $A$. We also briefly discuss how these
assumptions are related to those in the previous works \cite{LT97,TW14,W11}, see Remark \ref{rem:single} below.

Recall that a linear operator $H\colon\cH\supset\Dom(H)\to\cH$ is said to be \emph{$A$-bounded} (or \emph{relatively bounded with
respect to $A$}) if  $\Dom(H)\supset\Dom(A)$ and there exist constants $a,b\ge0$ such that
\begin{equation}\label{eq:defRelBound}
 \norm{Hx}\le a\norm{x} + b\norm{Ax} \quad\text{ for }\quad x\in\Dom(A)\,.
\end{equation}
If $H$ is $A$-bounded, the \emph{$A$-bound} of $H$ (or \emph{relative bound of $H$ with respect to $A$}) is defined as the infimum
of all possible choices for $b$ in \eqref{eq:defRelBound}, see \cite[Section IV.1.1]{Kato66}.

A particular case of $A$-bounded operators are linear operators $H\colon\cH\supset\Dom(H)\to\cH$ with $\Dom(H)\supset\Dom(A)$ such
that the operator $H(A-\lambda)^{-1}$ is bounded on $\cH$ for some $\lambda\in\rho(A)$, the resolvent set of $A$. Indeed, in this
case one has (see, e.g., \cite[Lemma 7.2\,(i)]{W11})
\[
 \norm{Hx} \le \norm{H(A-\lambda)^{-1}}\,\norm{(A-\lambda)x}
 \le \norm{H(A-\lambda)^{-1}}\;\abs{\lambda}\;\norm{x} + \norm{H(A-\lambda)^{-1}}\;\norm{Ax}
\]
for all $x\in\Dom(A)$. In particular, the $A$-bound of $H$ does not exceed $\norm{H(A-\lambda)^{-1}}$.

The following criterion, which guarantees that the resolvent sets of $A+V$ and $A-YV$ intersect, follows from a basic Neumann
series argument.

\begin{lemma}\label{lem:Neumann}
 In addition to Hypothesis \ref{hyp:blockMatrix}, assume that $A$ is closed with non-empty resolvent set and that $V$ is
 $A$-bounded. Moreover, suppose that the graph subspaces $\cG(\cH_0,X_0)$ and $\cG(\cH_1,X_1)$ associated with bounded linear
 operators $X_0\colon\cH_0\to\cH_1$ and $X_1\colon\cH_1\to\cH_0$, respectively, are complementary invariant subspaces for $A+V$.
 Define the operator $Y$ as in \eqref{eq:defY}.

 If
 \begin{equation}\label{eq:Neumann}
  \max\{1,\norm{Y}\}\,\cdot\, \norm{V(A-\lambda)^{-1}} < 1 \quad\text{for some}\quad \lambda\in\rho(A)\,,
 \end{equation}
 then the operators $A+V$ and $A-YV$ are closed and $\lambda$ belongs to the intersection of their resolvent sets.

 In particular, the resolvent sets of $A+V$ and $A-YV$ intersect if
 \begin{equation}\label{eq:resolventBound}
  \liminf_{\lambda\in\rho(A)}\norm{V(A-\lambda)^{-1}} = 0\,.
 \end{equation}

 \begin{proof}
  Let $H$ be either $V$ or $-YV$. Write
  \[
   A+H-\lambda = \bigl(I_\cH + H(A-\lambda)^{-1}\bigr)(A-\lambda)\,.
  \]
  By \eqref{eq:Neumann}, $\|H(A-\lambda)^{-1}\|<1$, so that the operator $I_\cH + H(A-\lambda)^{-1}$ has a bounded inverse.
  Therefore, $\lambda$ belongs to the resolvent set of $A+H$. This proves the claim.
 \end{proof}%
\end{lemma}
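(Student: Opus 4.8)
The plan is to reduce the whole statement to a single Neumann-series argument applied simultaneously to the two perturbations $H=V$ and $H=-YV$ of the closed operator $A$. The starting observation is that both $V$ and $-YV$ are $A$-bounded: for $V$ this is assumed, and for $-YV$ it follows because $Y$ is a bounded everywhere-defined operator on $\cH$, so that $\norm{YVx}\le\norm{Y}\,\norm{Vx}$ for $x\in\Dom(V)\supset\Dom(A)$. In particular, with $\lambda\in\rho(A)$ as in \eqref{eq:Neumann}, both $V(A-\lambda)^{-1}$ and $YV(A-\lambda)^{-1}$ are bounded operators defined on all of $\cH$, as recorded in the discussion preceding the lemma, and $\Dom(A+V)=\Dom(A-YV)=\Dom(A)$.

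First I would record the key factorization: for $H\in\{V,-YV\}$ and $x\in\Dom(A)$ one has
\[
 (A+H-\lambda)x = \bigl(I_\cH + H(A-\lambda)^{-1}\bigr)(A-\lambda)x,
\]
the two sides agreeing on their common natural domain $\Dom(A)$. The factor $I_\cH + H(A-\lambda)^{-1}$ is a bounded operator on $\cH$, and I would show it is invertible via the Neumann series by checking that $\norm{H(A-\lambda)^{-1}}<1$ in both cases. For $H=V$ this is immediate from \eqref{eq:Neumann}, since $\max\{1,\norm{Y}\}\ge1$. For $H=-YV$ I would use the submultiplicative estimate $\norm{YV(A-\lambda)^{-1}}\le\norm{Y}\,\norm{V(A-\lambda)^{-1}}\le\max\{1,\norm{Y}\}\,\norm{V(A-\lambda)^{-1}}<1$, again by \eqref{eq:Neumann}.

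Granting $\norm{H(A-\lambda)^{-1}}<1$, the operator $I_\cH + H(A-\lambda)^{-1}$ has a bounded inverse, so in the factorization above $A+H-\lambda$ is the composition of the closed bijection $A-\lambda$ (closed since $A$ is closed, bijective since $\lambda\in\rho(A)$) with a bounded bijection of $\cH$. Such a composition is again closed and bijective, with bounded inverse $(A-\lambda)^{-1}\bigl(I_\cH + H(A-\lambda)^{-1}\bigr)^{-1}$; hence $A+H$ is closed and $\lambda\in\rho(A+H)$. Applying this to $H=V$ and $H=-YV$ gives precisely that $A+V$ and $A-YV$ are closed with $\lambda$ in the intersection of their resolvent sets, which is the first assertion. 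The ``in particular'' claim then follows at once: if \eqref{eq:resolventBound} holds, I can select $\lambda\in\rho(A)$ with $\norm{V(A-\lambda)^{-1}}$ smaller than $1/\max\{1,\norm{Y}\}$, so that \eqref{eq:Neumann} is satisfied for this $\lambda$ and the first part applies.

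I do not anticipate a genuinely hard step here; the argument is a standard perturbation-by-Neumann-series computation. The only points requiring a little care are the bookkeeping that $-YV$ inherits $A$-boundedness from $V$ (so that $YV(A-\lambda)^{-1}$ is bounded and the factorization is legitimate on the correct domain $\Dom(A)$), and the precise role of the factor $\max\{1,\norm{Y}\}$, which is exactly what is needed to force both $\norm{V(A-\lambda)^{-1}}$ and $\norm{YV(A-\lambda)^{-1}}$ below $1$ from the single hypothesis \eqref{eq:Neumann}.
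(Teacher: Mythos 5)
Your proposal is correct and follows essentially the same route as the paper's own proof: the factorization $A+H-\lambda=\bigl(I_\cH+H(A-\lambda)^{-1}\bigr)(A-\lambda)$ for $H\in\{V,-YV\}$ combined with a Neumann-series inversion of the bounded factor, the role of $\max\{1,\norm{Y}\}$ being exactly to control both cases at once. The additional bookkeeping you supply (that $-YV$ inherits $A$-boundedness, and that the composition of the closed bijection $A-\lambda$ with a bounded bijection is closed and bijective) is accurate and merely makes explicit what the paper leaves implicit.
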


The following corollary to Lemma \ref{lem:Neumann} ensures a nontrivial intersection of the resolvent sets of $A+V$ and $A-YV$
under conditions in terms of the $A$-bound of $V$ and certain additional spectral properties of $A$. In this context, recall that
the operator $A$ is said to be \emph{$m$-accretive} if all $\lambda$ satisfying $\Re\lambda<0$ belong to the resolvent set of $A$
with $\norm{(A-\lambda)^{-1}}\le \abs{\Re\lambda}^{-1}$, see \cite[Section V.3.10]{Kato66}.

\begin{corollary}\label{cor:Neumann}
 Assume the hypotheses of Lemma \ref{lem:Neumann}. If either
 \begin{enumerate}
  \renewcommand{\theenumi}{\alph{enumi}}
  \item there is a sequence $(\lambda_k)$ in the resolvent set of $A$ such that
        \[
         \abs{\lambda_k}\to\infty\quad\text{as}\quad k\to\infty \quad\text{ and }\quad
         \sup_k\abs{\lambda_k}\,\norm{(A-\lambda_k)^{-1}} < \infty
        \]
        and $V$ has $A$-bound $0$

  or

  \item $A$ is self-adjoint or $m$-accretive and the $A$-bound $b_*$ of $V$ satisfies
        \begin{equation}\label{eq:ABound}
         \max\{1,\norm{Y}\}b_* < 1\,,
        \end{equation}
 \end{enumerate}
 then the operators $A+V$ and $A-YV$ have a common point in their resolvent sets.

 \begin{proof}
  It suffices to show that \eqref{eq:resolventBound} and \eqref{eq:Neumann}, respectively, are satisfied. In case of (a), this
  follows from \cite[Lemma 7.2\,(iii)]{W11}.

  In case of (b), note that for self-adjoint or $m$-accretive $A$ one has
  \[
   \liminf_{\lambda\in\rho(A)} \norm{V(A-\lambda)^{-1}} = b_*\,,
  \]
  see \cite[Lemma 7.2\,(ii)]{W11}; cf.\ also \cite[Satz 9.1]{Wei00}. Thus, condition \eqref{eq:ABound} immediately implies
  \eqref{eq:Neumann}.
 \end{proof}%
\end{corollary}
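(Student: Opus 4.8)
The plan is to reduce Corollary \ref{cor:Neumann} to Lemma \ref{lem:Neumann} by verifying, in each of the two cases, the hypotheses under which that lemma applies. Concretely, case (a) should yield condition \eqref{eq:resolventBound}, while case (b) should yield condition \eqref{eq:Neumann} for a suitable $\lambda\in\rho(A)$; in either situation Lemma \ref{lem:Neumann} then directly gives a common point in the resolvent sets of $A+V$ and $A-YV$. Since $Y$ is a fixed bounded operator and $V$ is $A$-bounded by the standing hypotheses, the quantity $\max\{1,\norm{Y}\}$ is a finite constant throughout, and the whole argument is a matter of controlling $\norm{V(A-\lambda)^{-1}}$ along an appropriate family of $\lambda$.

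First I would treat case (a). The goal is to show $\liminf_{\lambda\in\rho(A)}\norm{V(A-\lambda)^{-1}}=0$. The natural approach is to exploit the sequence $(\lambda_k)$ with $\abs{\lambda_k}\to\infty$ and $\sup_k\abs{\lambda_k}\,\norm{(A-\lambda_k)^{-1}}<\infty$ together with the hypothesis that $V$ has $A$-bound $0$. Using the relative bound with an arbitrarily small $b$, one estimates $\norm{V(A-\lambda_k)^{-1}x}\le a\norm{(A-\lambda_k)^{-1}x}+b\norm{A(A-\lambda_k)^{-1}x}$, and here one writes $A(A-\lambda_k)^{-1}=I_\cH+\lambda_k(A-\lambda_k)^{-1}$ so that $\norm{A(A-\lambda_k)^{-1}}\le 1+\abs{\lambda_k}\norm{(A-\lambda_k)^{-1}}$, which stays bounded by hypothesis. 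The term involving $a$ tends to $0$ because $\norm{(A-\lambda_k)^{-1}}\le \abs{\lambda_k}^{-1}\cdot\sup_k\abs{\lambda_k}\norm{(A-\lambda_k)^{-1}}\to 0$, and the term involving $b$ can be made small since $b$ may be chosen arbitrarily small (the $A$-bound being $0$). Taking $k\to\infty$ and then $b\to 0$ gives $\liminf_k\norm{V(A-\lambda_k)^{-1}}=0$, hence \eqref{eq:resolventBound}. Since the excerpt already records this computation in \cite[Lemma 7.2\,(iii)]{W11}, I would simply invoke that reference.

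For case (b), I would use the sharper fact, valid precisely when $A$ is self-adjoint or $m$-accretive, that $\liminf_{\lambda\in\rho(A)}\norm{V(A-\lambda)^{-1}}$ equals the $A$-bound $b_*$ of $V$ exactly, rather than merely being bounded by it. This identity, quoted as \cite[Lemma 7.2\,(ii)]{W11} (see also \cite[Satz 9.1]{Wei00}), is the crucial input: it lets one produce, for any $\eps>0$, a resolvent point $\lambda$ with $\norm{V(A-\lambda)^{-1}}<b_*+\eps$. Combining this with the assumption \eqref{eq:ABound} that $\max\{1,\norm{Y}\}b_*<1$, one chooses $\eps$ small enough that $\max\{1,\norm{Y}\}(b_*+\eps)<1$, securing \eqref{eq:Neumann} at that $\lambda$.

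The main obstacle is really a matter of locating the correct spectral estimates for $\norm{V(A-\lambda)^{-1}}$ rather than any deep new argument, so the honest strategy is to cite \cite[Lemma 7.2]{W11}: part (iii) handles case (a) and part (ii) handles case (b). The only genuine subtlety is that in case (b) one needs the exact equality $\liminf\norm{V(A-\lambda)^{-1}}=b_*$ and not merely the inequality ``$\le b_*$'', because it is the attainability of values arbitrarily close to $b_*$ that, under the strict inequality \eqref{eq:ABound}, lets one cross below the threshold $1$ after multiplying by $\max\{1,\norm{Y}\}$; this equality is exactly where self-adjointness or $m$-accretivity of $A$ is used. Once the relevant estimate is in hand, both cases close immediately by appealing to Lemma \ref{lem:Neumann}.
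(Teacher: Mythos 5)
Your proposal is correct and takes essentially the same route as the paper: both cases are reduced to Lemma \ref{lem:Neumann}, with case (a) yielding \eqref{eq:resolventBound} via \cite[Lemma 7.2\,(iii)]{W11} and case (b) yielding \eqref{eq:Neumann} via the exact equality $\liminf_{\lambda\in\rho(A)}\norm{V(A-\lambda)^{-1}}=b_*$ for self-adjoint or $m$-accretive $A$ from \cite[Lemma 7.2\,(ii)]{W11}. The only difference is that you additionally spell out the estimates behind the cited lemmas and the $\eps$-argument in case (b), which the paper leaves implicit in the references.
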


\begin{remark}
 If the operator $Y$ is in advance known to be a contraction, that is, $\norm{Y}\le 1$, then the conditions \eqref{eq:Neumann} and
 \eqref{eq:ABound} simplify as $\norm{V(A-\lambda)^{-1}}<1$ for some $\lambda\in\rho(A)$ and $b_*<1$, respectively. This situation
 is revisited in Theorem \ref{thm:subordinated} below. Moreover, if $Y$ even is a uniform contraction, that is, $\norm{Y}<1$, then
 the hypothesis on the graphs to be complementary is redundant (see Remark \ref{rem:automorphism}), which further simplifies the
 situation. We refer to \cite{AL95,ADL01,AMM03,GKMV13,KMM2004,KMM2005,LT97,MoSe06}, where different aspects of existence for graphs
 with contractive angular operators, that is, for contractive solutions to operator Riccati equations (cf.\ Lemma
 \ref{lem:invariance}) have been discussed.

 However, if $\norm{Y}>1$, then, in the framework of our approach, we have to stick to conditions \eqref{eq:Neumann},
 \eqref{eq:resolventBound}, or their particular cases in Corollary \ref{cor:Neumann}, that is, roughly speaking, the norm of
 $V(A-\lambda)^{-1}$ has to compensate by being sufficiently small.
\end{remark}

\begin{remark}\label{rem:single}
 Analogous considerations can be made in the (more general) context of single graph subspaces in Theorem \ref{thm:single}. For
 instance, condition \eqref{eq:resolventBound} also guarantees that the operators $A+V-\lambda$ and $A_1-\lambda-X_0W_1$ are
 bijective for some $\lambda$ in the resolvent set of $A$. Since this requires only mild and obvious modifications of the above
 reasoning, we omit the details. However, under the additional assumption \eqref{eq:resolventBound}, one implication of the
 statement of Theorem \ref{thm:single} has already been proved in \cite[Proposition 7.5]{W11} using a different technique;
 cf.\ also \cite[Lemma 6.1]{TW14} and the proof of \cite[Theorem 4.1]{LT97} in particular situations where
 \eqref{eq:resolventBound} is satisfied.
\end{remark}

As a consequence of Lemma 5.1 and its particular cases in Corollary \ref{cor:Neumann}, Theorem \ref{thm:main} can be applied in
these situations. Namely, the pair of subspaces $\cG(\cH_0,X_0)$ and $\cG(\cH_1,X_1)$ decomposes the operator $A+V$ if and only if
the operator $Y$ is a strong solution to the operator Riccati equation $AY-YA-YVY + V=0$, that is,
\[
 \Ran (Y|_{\Dom(A)})\subset\Dom(A)=\dav
\]
and
\begin{equation}\label{eq:opeq0}
 AYx-YAx-YVYx + Vx=0 \quad\text{for}\quad x\in\Dom(A)\,,
\end{equation}
cf.\ Remark \ref{rem:strong}. Moreover, if this equivalence takes place, the operator $A+V$ admits the block diagonalization
\eqref{eq:blockDiag2ndMain}, that is,
\begin{equation}\label{eq:blockDiag2nd}
 (I_\cH-Y)(A+V)(I_\cH-Y)^{-1} = A-YV = \begin{pmatrix} A_0 - X_1W_0 & 0\\ 0 & A_1 - X_0W_1\end{pmatrix}.
\end{equation}

It has already been mentioned in Remark \ref{rem:alternative} that in the current case of $\Dom(A)\subset\Dom(V)$ the block
diagonalization \eqref{eq:blockDiag2nd} can be rewritten as the block diagonalization
\begin{equation}\label{eq:blockDiag1st}
 (I_\cH+Y)^{-1}(A+V)(I_\cH+Y) = A+VY = \begin{pmatrix} A_0 + W_1X_0 & 0\\ 0 & A_1 + W_0X_1\end{pmatrix},
\end{equation}
where the latter has been the object of focused attention in the literature so far. In particular, Theorem \ref{thm:main} in
combination with Lemma \ref{lem:Neumann} and Corollary \ref{cor:Neumann} can be considered a direct extension of
\cite[Lemma 5.3 and Theorem 5.5]{AMM03}, where the diagonal part $A$ is assumed to be self-adjoint and the off-diagonal
perturbation $V$ is assumed to be a bounded symmetric operator. Here, we also have the additional (new) block diagonalization
formula \eqref{eq:blockDiag2nd}. At the same time, our argument closes a gap in reasoning in the proof of \cite[Lemma 5.3]{AMM03},
where it has implicitly been assumed that $I_\cH\pm Y$ maps $\Dom(A)$ onto itself.

\section{An example for self-adjoint $2\times 2$ operator matrices}\label{sec:Example}

The general conditions \eqref{eq:Neumann} or \eqref{eq:resolventBound} in Section \ref{sec:app} may not be sufficient to guarantee
the required existence of invariant graph subspaces for $A+V$ in Theorem \ref{thm:main}. Stronger conditions on the diagonal part
$A$ and/or the off-diagonal perturbation $V$ that guarantee the existence of such graph subspaces have been discussed, e.g., in
\cite{LT97,TW14,W11}. In those situations, Theorem \ref{thm:main} applies automatically. 

In this last section, we revisit the case of self-adjoint $2\times 2$ operator matrices where the spectra of the diagonal entries
are subordinated but may have a one point intersection. This situation has previously been discussed in \cite{KMM2004,LT97} and
complements a generalization of the well-known Davis-Kahan $\tan2\Theta$ theorem.

Recall that a closed subspace $\cU$ is said to \emph{reduce} a linear operator $B$ if the pair $\cU$ and its orthogonal complement
$\cU^\perp$ decompose $B$. In particular, $\cU$ reduces $B$ if and only if $\cU^\perp$ does. In this context, it is worth to note
that the orthogonal complement of a graph subspace $\cG(\cH_0,X)$ with some linear operator $X\colon\cH_0\to\cH_1$ is again a
graph, namely
\[
 \cG(\cH_0,X)^\perp = \cG(\cH_1,-X^*)\,.
\]
Also recall that for a self-adjoint operator $B$ with spectral measure $\EE_B$, every spectral subspace $\Ran\EE_B(\Delta)$ with a
Borel set $\Delta\subset\R$ automatically reduces $B$, see, e.g., \cite[Satz 8.15]{Wei00}.

As an application of our abstract results we have the following statement, part (a) of which is essentially known
under even more general assumptions, see Theorem 2.7.7 and the extension mentioned in Remark 2.7.12 and Proposition 2.7.13 in
\cite{Tre08}. Part (b) of our theorem, however, strengthens the corresponding statements of Theorem 2.7.21, Corollary 2.7.23, and
Theorem 2.8.5 in \cite{Tre08}.

\begin{theorem}\label{thm:subordinated}
 In addition to Hypothesis \ref{hyp:blockMatrix}, assume that the diagonal operator $A$ is self-adjoint and that the spectra of
 $A_0$ and $A_1$ are subordinated in the sense that
 \[
  \sup\spec(A_0) \le \mu \le \inf\spec(A_1) \quad\text{for some}\quad \mu\in\R\,.
 \]
 Furthermore, suppose that the off-diagonal perturbation $V$ is symmetric with $W_0=W_1^*$.

 \begin{enumerate}
  \item If $\Dom(V)\supset \Dom(A)$ and the operator matrix $B=A+V$ is self-adjoint on its natural domain $\Dom(B)=\Dom(A)$, then
        the subspace
        \begin{equation}\label{eq:defL}
         \cL:=\Ran\EE_{B}\bigl((-\infty,\mu)\bigr) \oplus \bigl(\Ker(B-\mu)\cap\cH_0\bigr)
        \end{equation}
        reduces the operator $B$. Moreover, this subspace can be represented as a graph
        \begin{equation}\label{eq:graphL}
         \cL=\cG(\cH_0,X)
        \end{equation}
        with some linear contraction $X\colon\cH_0\to\cH_1$.

  \item If even $V$ is $A$-bounded with $A$-bound smaller than $1$, then the skew-symmetric operator
        $Y=\begin{pmatrix} 0 & -X^*\\ X & 0 \end{pmatrix}$ with $X$ as in \eqref{eq:graphL} is a strong solution to the operator
        Riccati equation $AY-YA-YVY+V=0$. Moreover, the operator $B=A+V$ admits the block diagonalizations \eqref{eq:blockDiag2nd}
        and \eqref{eq:blockDiag1st} with $W_0,X_0,X_1$ replaced by $W_1^*,X,-X^*$, respectively. In particular, the operators
        $A+VY$ and $A-YV$ are mutually adjoint to each other on their natural domain $\Dom(A)$.
 \end{enumerate}
\end{theorem}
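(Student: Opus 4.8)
The plan is to handle parts (a) and (b) separately, using part (a) only to manufacture the two complementary graph subspaces to which Theorem~\ref{thm:main} is then applied in part (b). For the reduction claim in (a), I would check directly that the orthogonal projection $P_\cL$ onto $\cL$ commutes with $B$. Writing $P_\cL=\EE_B((-\infty,\mu))+R$, where $R$ is the orthogonal projection onto $\Ker(B-\mu)\cap\cH_0$, the two summands have mutually orthogonal ranges because $(-\infty,\mu)$ and $\{\mu\}$ are disjoint. The spectral projection $\EE_B((-\infty,\mu))$ commutes with $B$ by the spectral theorem, and $R$ commutes with $B$ as well: $\Ran R\subset\Ker(B-\mu)$ gives $BR=\mu R$, while self-adjointness of $B$ gives $\Ran(B-\mu)\perp\Ker(B-\mu)\supset\Ran R$, hence $R(B-\mu)x=0$, i.e.\ $RB=\mu R$ on $\Dom(B)$. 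Thus $\cL$ reduces $B$, and consequently so does $\cL^\perp$.

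For the graph representation \eqref{eq:graphL} I would first show $\cL\cap\cH_1=\{0\}$ and $\cL^\perp\cap\cH_0=\{0\}$. Both follow from the subordination $A_0\le\mu\le A_1$ together with the reducing property: for $\ell=0\oplus v\in\cL$ one has $\langle(B-\mu)\ell,\ell\rangle=\langle(A_1-\mu)v,v\rangle\ge0$, whereas $\cL\subset\Ran\EE_B((-\infty,\mu])$ forces $\langle(B-\mu)\ell,\ell\rangle\le0$; equality then pushes $\ell$ into $\Ker(B-\mu)$, so $\ell\in\Ker(B-\mu)\cap\cH_0\cap\cH_1=\{0\}$, and the symmetric computation handles $\cL^\perp\cap\cH_0$. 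These make $P_{\cH_0}|_\cL$ injective with dense range, so $\cL$ is the graph of a densely defined operator $X$ over $\cH_0$. The remaining and genuinely delicate point is the bound $\norm{X}\le1$: as a $2\times2$ scalar example already shows, triviality of these intersections alone does \emph{not} yield a contraction, so the estimate must use the spectral nature of $\cL$ and not merely the quadratic form inequality for single vectors. This is exactly the subordinated Davis--Kahan (generalized $\tan2\Theta$) phenomenon, which under a one-point spectral intersection produces $\norm X\le1$; I would invoke it as in Theorem~2.7.7 of \cite{Tre08}. Once $\norm X\le1$ is secured, closedness of $\cL$ upgrades $X$ to a contraction defined on all of $\cH_0$, and the orthogonal complement is $\cL^\perp=\cG(\cH_1,-X^*)$ with $\norm{-X^*}\le1$.

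For part (b) I would set $X_0:=X$ and $X_1:=-X^*$, so that the complementary graphs are $\cG(\cH_0,X_0)=\cL$ and $\cG(\cH_1,X_1)=\cL^\perp$, and the operator in \eqref{eq:defY} is precisely the skew-symmetric $Y$ of the statement, with $\norm Y=\max\{\norm X,\norm{X^*}\}\le1$. Since $\cL$ reduces $B$, the pair decomposes $B$, i.e.\ assertion~i) of Theorem~\ref{thm:main} holds. Because $A$ is self-adjoint, $V$ has $A$-bound $b_*<1$, and $\max\{1,\norm Y\}=1$, Corollary~\ref{cor:Neumann}\,(b) supplies a common point in the resolvent sets of the closed operators $A+V$ and $A-YV$. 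Theorem~\ref{thm:main} then yields assertion~iii): $\Dom(B)$ is invariant for $X_0$ and $X_1$, which by Remark~\ref{rem:strong} means exactly that $Y$ is a strong solution of $AY-YA-YVY+V=0$. The block diagonalization \eqref{eq:blockDiag2nd} is \eqref{eq:blockDiag2ndMain} read off with these entries, and since $V$ is $A$-bounded we have $\Dom(A)\subset\Dom(V)$, so Remark~\ref{rem:alternative} converts it into \eqref{eq:blockDiag1st}.

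For the final mutual-adjointness claim I would use the two similarity relations $A-YV=(I_\cH-Y)B(I_\cH-Y)^{-1}$ and $A+VY=(I_\cH+Y)^{-1}B(I_\cH+Y)$ together with $B^*=B$ and $(I_\cH\mp Y)^*=I_\cH\pm Y$, the latter since $Y^*=-Y$. Taking adjoints in the first relation gives $(A-YV)^*=(I_\cH+Y)^{-1}B(I_\cH+Y)=A+VY$, both operators carried on $\Dom(A)$ because $Y$ leaves $\Dom(A)$ invariant (strong solution). I expect the one real obstacle to be the contraction bound $\norm X\le1$ in part (a), inherited from the subordinated spectral-subspace theory; once that is in hand, part (b) is a bookkeeping application of Theorem~\ref{thm:main}, Corollary~\ref{cor:Neumann}, and Remark~\ref{rem:alternative}.
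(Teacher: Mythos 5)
Your part (b) is essentially the paper's own argument: once $\cL=\cG(\cH_0,X)$ with a contraction $X$ is in hand, you feed the pair $\cG(\cH_0,X)$, $\cG(\cH_1,-X^*)$ into Corollary \ref{cor:Neumann}\,(b) and Theorem \ref{thm:main}, read off the strong Riccati property via Remark \ref{rem:strong}, pass from \eqref{eq:blockDiag2nd} to \eqref{eq:blockDiag1st} via Remark \ref{rem:alternative}, and get mutual adjointness by taking adjoints in the two similarity relations (the paper does the same, citing Weidmann). Your verification of the reducing property in part (a) --- checking that $P_\cL=\EE_B\bigl((-\infty,\mu)\bigr)+R$ commutes with $B$, using $BR=\mu R$ and $R(B-\mu)x=0$ for $x\in\Dom(B)$ --- is correct and in fact more direct than the paper's Steps 1 and 2, which route through the kernel splitting.

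The genuine gap is in the graph representation \eqref{eq:graphL}. You delegate the contraction bound $\norm{X}\le 1$ to Theorem 2.7.7 of \cite{Tre08} (equivalently \cite[Theorem 3.1]{LT97}), on the grounds that the estimate ``must use the spectral nature of $\cL$''. But those results concern genuine spectral subspaces, and $\cL$ is in general \emph{not} a spectral subspace of $B$: precisely when $\mu$ is an eigenvalue of $B$ whose eigenspace meets both $\cH_0$ and $\cH_1$ nontrivially, one has $\Ran\EE_B\bigl((-\infty,\mu)\bigr)\subsetneq\cL\subsetneq\Ran\EE_B\bigl((-\infty,\mu]\bigr)$, and neither spectral subspace can play the role of $\cL$ (the larger one contains $\Ker(B-\mu)\cap\cH_1$ and so is not a graph over $\cH_0$; the smaller one is a graph only over a proper subspace of $\cH_0$). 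So the citation does not reach the subspace you need it for, and this is exactly the case the theorem is designed to cover. Closing this gap is the content of the paper's kernel-splitting lemma and Step 3 of its proof: one first shows $\Ker(B-\mu)=\bigl(\Ker(A_0-\mu)\cap\Ker W_1^*\bigr)\oplus\bigl(\Ker(A_1-\mu)\cap\Ker W_1\bigr)\subset\Ker(A-\mu)$ as in \eqref{eq:kernel}, then splits off $\Ker(B-\mu)$, so that on $\hat{\cH}=\Ker(B-\mu)^\perp=\hat{\cH}_0\oplus\hat{\cH}_1$ the operator $\hat{B}=B|_{\hat{\cH}}$ is again an off-diagonal perturbation (after replacing $\hat{W}$ by its closure $\hat{W}^{**}$) of a subordinated diagonal operator for which $\mu$ is not an eigenvalue; only then does \cite[Theorem 3.1]{LT97} apply, and $X$ is obtained by extending the resulting contraction $\hat{X}$ by zero on $\Ker(B-\mu)\cap\cH_0$. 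A secondary defect: your argument for $\cL\cap\cH_1=\{0\}$ evaluates $\langle(B-\mu)\ell,\ell\rangle$, which requires $\ell\in\Dom(B)$; vectors of $\cL\cap\cH_1$ need not lie in $\Dom(B)$, and spectral truncation does not help, since $\EE_B\bigl([\mu-n,\mu]\bigr)\ell$ stays in $\cL$ but leaves $\cH_1$. So even the trivial-intersection step is not fully established --- although in the paper's route it is never needed, because the graph property comes packaged with the reduction to \cite{LT97}.
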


Clearly, the subspace $\cL$ in \eqref{eq:defL} satisfies
\[
 \EE_B\bigl((-\infty,\mu)\bigr) \subset \cL \subset \EE_B\bigl((-\infty,\mu]\bigr)
\]
and either inclusion may a priori be strict. In particular, $\cL$ is not necessarily a spectral subspace for $B$. On the other
hand, if $\mu$ is not an eigenvalue of $B$ (cf.\ Remark \ref{rem:spectra} below), then $\cL$ is spectral with
\begin{equation}\label{eq:cLLT}
 \cL = \Ran\EE_{A+V}\bigl((-\infty,\mu)\bigr) = \Ran\EE_{A+V}\bigl((-\infty,\mu]\bigr)\,.
\end{equation}
In this particular case, the graph representation \eqref{eq:graphL} for $\cL$ has already been shown explicity in
\cite[Theorem 3.1]{LT97}; see also \cite[Theorem 2.7.7]{Tre08}.

\begin{remark}\label{rem:comparisonLT}
 In the situation of \eqref{eq:cLLT}, a direct application of \cite[Corollary 3.2]{LT97} only yields
 \[
  AYx - YAx - YVYx + Vx = 0
 \]
 and the block `diagonalization'
 \[
  (I_\cH+Y)^{-1}(A+V)(I_\cH+Y)x=(A+VY)x
 \]
 for $x\in \cD=\Ran\bigl((I_\cH+Y)^{-1}|_{\Dom(A)}\bigr)$, which is a somewhat weaker statement than the one of Theorem
 \ref{thm:subordinated}\,(b), unless it is known in advance that $\cD=\Dom(A+VY)=\Dom(A)$. In our line of reasoning, the latter
 domain equality is a byproduct of the block diagonalization \eqref{eq:blockDiag2nd}, see also Remark \ref{rem:alternative}.

\end{remark}

The main part of the proof of Theorem \ref{thm:subordinated} consists in establishing the representation \eqref{eq:graphL} under
the sole assumption that $B=A+V$ is self-adjoint. As in \cite[Theorem 2.4]{KMM2004}, where the bounded case was discussed, this is
done by reducing the problem to the case where $\mu$ is not an eigenvalue of $B$. To this end,
it is crucial to know that the kernel of $B-\mu$
splits with respect to the decomposition $\cH=\cH_0\oplus\cH_1$, that is,
\begin{equation}\label{eq:kernelSplit}
 \Ker(B-\mu) = \bigl(\Ker(B-\mu)\cap \cH_0\bigr)\oplus \bigl(\Ker (B-\mu)\cap \cH_1\bigr)\,,
\end{equation}
cf.\ \cite[Remark 3.4]{LT97} and also \cite[Remark 2.7.12]{Tre08}.
This information is provided by the following lemma, which is already known explicitly in the bounded case (see
\cite[Theorem 2.2]{KMM2004}) and for some unbounded operator matrices under similar assumptions, see \cite[Proposition 2.7.13]{Tre08};
cf.\ \cite[Theorem 2.13]{Schm15} for an analogue for form perturbations.

\begin{lemma}
 Under the hypotheses of Theorem \ref{thm:subordinated}, the kernel of $B-\mu$ splits as
 \begin{equation}\label{eq:kernel}
  \Ker(B-\mu) = \bigl(\Ker(A_0-\mu)\cap\Ker W_1^*\bigr) \oplus \bigl(\Ker(A_1-\mu)\cap\Ker W_1\bigr)\subset \Ker(A-\mu)\,.
 \end{equation}
 In particular, the identity \eqref{eq:kernelSplit} holds.

 \begin{proof}
  Clearly, it suffices to show \eqref{eq:kernel}, the proof of which is essentially the same as for the bounded case in
  \cite{KMM2004}. For the sake of completeness, we reproduce it here.
  
  Let $x=f\oplus g\in\Ker(B-\mu)$ with $f\in\dav_0$ and $g\in\dav_1$ be arbitrary. Then
  \begin{equation}\label{eq:kerApply}
   (A_0-\mu)f + W_1g = 0 \quad \text{ and } \quad W_1^*f + (A_1-\mu)g=0 \,.
  \end{equation}

  Assume that $f\notin\Ker(A_0-\mu)$. It then follows from \eqref{eq:kerApply} that
  \[
   \langle f,W_1g\rangle = -\langle f,(A_0-\mu)f\rangle > 0
  \]
  and, therefore,
  \[
   \langle g,(A_1-\mu)g\rangle = -\langle g,W_1^*f\rangle = -\langle W_1g,f\rangle < 0\,,
  \]
  which is a contradiction to the hypothesis $A_1\ge \mu$. Hence, $f\in\Ker(A_0-\mu)$ and, in turn, $g\in\Ker W_1$ by
  \eqref{eq:kerApply}. Analogously, it follows that $g\in\Ker(A_1-\mu)$ and $f\in\Ker W_1^*$. This shows that the left-hand side of
  \eqref{eq:kernel} is contained in the right-hand side. The converse inclusion is obvious, so that \eqref{eq:kernel} holds.
 \end{proof}%
\end{lemma}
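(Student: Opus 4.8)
The plan is to take an arbitrary $x=f\oplus g\in\Ker(B-\mu)$ with $f\in\dav_0$ and $g\in\dav_1$, to write out the two scalar equations encoded by $(B-\mu)x=0$, namely
$(A_0-\mu)f+W_1g=0$ and $W_1^*f+(A_1-\mu)g=0$ (using $W_0=W_1^*$), and to show that necessarily $f\in\Ker(A_0-\mu)\cap\Ker W_1^*$ and $g\in\Ker(A_1-\mu)\cap\Ker W_1$. The driving idea is a quadratic-form (energy) argument that exploits the spectral subordination $A_0\le\mu\le A_1$, which, combined with the off-diagonal coupling, forces the diagonal contributions to vanish.

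First I would argue by contradiction, assuming $f\notin\Ker(A_0-\mu)$. Pairing the first equation with $f$ gives $\langle f,W_1g\rangle=-\langle f,(A_0-\mu)f\rangle$; since $A_0-\mu$ is a nonpositive self-adjoint operator whose quadratic form vanishes exactly on its kernel, the assumption $f\notin\Ker(A_0-\mu)$ yields $\langle f,(A_0-\mu)f\rangle<0$ and hence $\langle f,W_1g\rangle>0$. Note that this quantity is real because $A_0$ is self-adjoint. Pairing the second equation with $g$ then gives $\langle g,(A_1-\mu)g\rangle=-\langle W_1g,f\rangle=-\langle f,W_1g\rangle<0$, contradicting $A_1\ge\mu$. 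Thus $f\in\Ker(A_0-\mu)$, and substituting back into the first equation forces $W_1g=0$, i.e.\ $g\in\Ker W_1$. Running the symmetric argument, beginning instead from the assumption $g\notin\Ker(A_1-\mu)$, produces in the same way the contradiction with $A_0\le\mu$ and yields $g\in\Ker(A_1-\mu)$ together with $f\in\Ker W_1^*$. This establishes the inclusion of $\Ker(B-\mu)$ in the right-hand side of \eqref{eq:kernel}.

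The reverse inclusion is immediate: if $f\in\Ker(A_0-\mu)\cap\Ker W_1^*$ and $g\in\Ker(A_1-\mu)\cap\Ker W_1$, then both scalar equations hold term by term, so $f\oplus g\in\Ker(B-\mu)$; moreover $(A-\mu)(f\oplus g)=0$, which simultaneously gives the stated inclusion $\Ker(B-\mu)\subset\Ker(A-\mu)$. The orthogonal splitting \eqref{eq:kernelSplit} is then read off directly, since the right-hand side of \eqref{eq:kernel} is already an orthogonal direct sum adapted to $\cH=\cH_0\oplus\cH_1$.

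The main obstacle I anticipate is the passage from the nonstrict operator inequality $A_0\le\mu$ to the strict scalar inequality $\langle f,(A_0-\mu)f\rangle<0$ for $f\notin\Ker(A_0-\mu)$ (and symmetrically for $A_1$). This rests on the elementary fact that for a nonnegative self-adjoint operator $C$ (here $C=\mu-A_0$) one has $\langle f,Cf\rangle=0$ if and only if $Cf=0$, which follows by writing $\langle f,Cf\rangle=\norm{C^{1/2}f}^2$. A secondary point requiring care is the reality of $\langle f,W_1g\rangle$, which is needed before comparing the two diagonal quadratic forms; this is automatic here, as that inner product equals $-\langle f,(A_0-\mu)f\rangle$ with $A_0$ self-adjoint.
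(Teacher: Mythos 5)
Your proof is correct and follows essentially the same argument as the paper: the same contradiction starting from $f\notin\Ker(A_0-\mu)$, the same pairing of the two component equations with $f$ and $g$, and the same use of the subordination $A_0\le\mu\le A_1$. The only difference is that you spell out the elementary facts (strict negativity of $\langle f,(A_0-\mu)f\rangle$ off the kernel via $\norm{C^{1/2}f}^2$, and reality of $\langle f,W_1g\rangle$) that the paper uses implicitly.
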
  

We are now ready to turn to the proof of Theorem \ref{thm:subordinated}.

\begin{proof}[Proof of Theorem \ref{thm:subordinated}]
 (a). {\it Step 1.} The spectral subspace $\Ran\EE_{B}(\{\mu\})=\Ker(B-\mu)$ for $B$ reduces also the operator $A$.

 Indeed, observe that $\Ran\EE_{B}(\{\mu\})$ is by \eqref{eq:kernel} a subspace of $\Ker(A-\mu)$, so that $\Ran\EE_{B}(\{\mu\})$
 is invariant for $A$. In view of $\Ran\EE_{B}(\{\mu\})\subset\Dom(A)$, it then follows by standard reasoning that the orthogonal
 complement $\Ran\EE_{B}(\{\mu\})^\perp$ is invariant for $A$ as well. Finally, taking into account that $\Dom(A)=\Dom(B)$ and that
 $\Ran\EE_{B}(\{\mu\})$ reduces $B$, one has
 \[
  \Dom(A)=\bigl(\Dom(A)\cap\Ran\EE_{B}(\{\mu\})\bigr) + \bigl(\Dom(A)\cap\Ran\EE_{B}(\{\mu\})^\perp\bigr)\,,
 \]
 so that $\Ran\EE_{B}(\{\mu\})$ reduces $A$.

 {\it Step 2.}  The subspace $\cL$ reduces the operator $B$:

 The pair $\Ran\EE_{B}\bigl((-\infty,\mu)\bigr)$ and
 $\Ran\EE_{B}\bigl([\mu,\infty)\bigr)=\Ran\EE_{B}(\{\mu\})\oplus\Ran\EE_{B}\bigl((\mu,\infty)\bigr)$ clearly decomposes the
 operator $B$. Taking into account the kernel splitting \eqref{eq:kernel}, it is then easy to see that the pair $\cL$ and
 \[
  \cL^\perp = \Ran\EE_{B}\bigl((\mu,\infty)\bigr) \oplus \bigl(\Ran\EE_{B}(\{\mu\})\cap\cH_1\bigr)
 \]
 decomposes $B$ as well, that is, $\cL$ reduces $B$.

 {\it Step 3}. Finally, we prove the graph representation \eqref{eq:graphL} by reducing the problem to the case where $\mu$ is not
 an eigenvalue of $B$. The corresponding reduction process is essentially the same as in Step 2 of the proof of
 \cite[Theorem 2.4]{KMM2004}, where the bounded case was discussed.

 Taking into account that by Step 1 the subspace $\hat{\cH}:=\Ran\EE_{B}(\{\mu\})^\perp$ reduces both $A$ and $B$, denote by
 $\hat{A}:=A|_{\hat{\cH}}$ and $\hat{B}:=B|_{\hat{\cH}}$ the corresponding parts of $A$ and $B$, respectively. In particular,
 $\hat{A}$ and $\hat{B}$ are self-adjoint, and $\Ker(\hat{B}-\mu)=\{0\}$.

 In view of the kernel splitting \eqref{eq:kernel}, a simple standard reasoning shows that also the orthogonal complement $\hat{H}$
 splits with respect to $\cH=\cH_0\oplus\cH_1$, that is,
 \[
  \hat{\cH}=\hat{\cH}_0\oplus\hat{\cH}_1\quad  \text{with}\quad 
  \hat{\cH}_0:=\cH_0\cap\hat{\cH} \quad\text{and}\quad \hat{\cH}_1:=\cH_1\cap\hat{\cH}\,.
 \]
 With respect to this orthogonal decomposition, the operator $\hat{A}$ has a diagonal representation
 \[
  \hat{A} = \begin{pmatrix} \hat{A}_0 & 0\\ 0 & \hat{A}_1 \end{pmatrix},
 \]
 and the self-adjoint operator $\hat{B}$ is an off-diagonal perturbation of $\hat{A}$, more precisely
 \[
  \hat{B} = \begin{pmatrix} \hat{A}_0 & 0\\ 0 & \hat{A}_1 \end{pmatrix} + \begin{pmatrix} 0 & \hat{W}\\ \hat{W}^* & 0 \end{pmatrix}
  = \begin{pmatrix} \hat{A}_0 & \hat{W}\\ \hat{W}^* & \hat{A}_1 \end{pmatrix},\quad \hat{W}:=W_1|_{\hat{\cH}_1}\,.
 \]
 
 Clearly, one has
 \[
  \sup\spec(\hat{A}_0) \le \mu \le \inf\spec(\hat{A}_1) \quad\text{and}\quad \Ker(\hat{B}-\mu)=\{0\}\,.
 \]
 Without loss of generality we may assume that the entry $\hat W$ is closed. Indeed, since $\hat{B}$ is self-adjoint,
 the entry $\hat{W}^*$ is densely defined, so that $\hat{W}^{**}\supset\hat{W}$. Taking into account the domain inclusion
 $\Dom(A)\subset\Dom(V)$, and hence $\Dom(\hat{A}_1)\subset\Dom(\hat{W})$, we may therefore replace $\hat{W}$ with $\hat{W}^{**}$
 to obtain
 \[
  \hat{B}=\begin{pmatrix} \hat{A}_0 & \hat{W}^{**}\\ \hat{W}^* & \hat{A}_1 \end{pmatrix}.
 \]
  Now, it  follows from \cite[Theorem 3.1]{LT97} that the subspace
 $\hat{\cL}:=\Ran\EE_{\hat{B}}\bigl((-\infty,\mu]\bigr)\subset\hat{\cH}$ is a graph with respect to a linear contraction
 $\hat{X}\colon\hat{\cH}_0\to\hat{\cH}_1$, that is, $\hat{\cL} = \cG(\hat{\cH}_0,\hat{X})$.
 
 Naturally embedding $\hat{\cH}$ into $\cH$, one has
 \begin{equation}\label{eq:H0}
  \cH_0 = \hat{\cH}_0 \oplus \bigl(\EE_{B}(\{\mu\})\cap\cH_0\bigr)
 \end{equation}
 and $\Ran\EE_{\hat{B}}\bigl((-\infty,\mu]\bigr)=\Ran\EE_{B}\bigl((-\infty,\mu)\bigr)$, so that
 \begin{equation}\label{eq:graph}
  \begin{aligned}
   \cL &= \hat{\cL} \oplus \bigl(\Ran\EE_{B}(\{\mu\})\cap\cH_0\bigr)\\
       &= \cG(\hat{\cH}_0,\hat{X}) \oplus \bigl(\Ran\EE_{B}(\{\mu\})\cap\cH_0\bigr)\,.
  \end{aligned}
 \end{equation}
 Define $X\colon\cH_0\to\cH_1$ by
 \begin{equation}\label{eq:defX}
  Xf := \begin{cases}
         \hat{X}f\,, & f\in\hat{\cH}_0\\
         0\,, & f\in\EE_{B}(\{\mu\})\cap\cH_0\,.
        \end{cases}
 \end{equation}
 The representation \eqref{eq:graphL} then follows from \eqref{eq:H0}--\eqref{eq:defX}. Moreover, since $\hat{X}$ is a contraction,
 $X$ is a contraction as well. This completes the proof of part (a).

 (b). First, note that $B$ automatically is self-adjoint if $V$ is $A$-bounded with $A$-bound smaller than $1$, see, e.g.,
 \cite[Theorem V.4.3]{Kato66}. By part (a), the subspace $\cL$ in \eqref{eq:defL} therefore is a graph $\cL=\cG(\cH_0,X)$ with some
 linear contraction $X\colon\cH_0\to \cH_1$. Taking into account that $\cG(\cH_0,X)^\perp=\cG(\cH_1,-X^*)$, the operator $Y$ then
 agrees with \eqref{eq:defY} for the pair of graphs $\cG(\cH_0,X)$ and $\cG(\cH_1,-X^*)$. It clearly is a contraction along with
 $X$, and, therefore, condition \eqref{eq:ABound} in Corollary \ref{cor:Neumann} holds. Thus, the operators $B=A+V$ and $A-YV$ have
 a common point in their resolvent sets.

 Since the subspace $\cL$ reduces $B$, the Riccati equation and the two block diagonalizations now follow from equations
 \eqref{eq:opeq0}--\eqref{eq:blockDiag1st} in the previous section. In turn, the remaining statement that $A-YV$ and $A+VY$ are
 mutually adjoint to each other follows from \eqref{eq:blockDiag2nd} and \eqref{eq:blockDiag1st}. Indeed, by
 \cite[Satz 2.43]{Wei00}, the adjoint of the left-hand side of \eqref{eq:blockDiag2nd} agrees with the one of
 \eqref{eq:blockDiag1st} and vice versa. This completes the proof.
\end{proof}%

It is interesting to note that standard theory such as \cite[Propositions 1.6 and 1.7]{Schm12} only yields the two extension
relations $(A+VY)^*\supset A-YV$ and $(A-YV)^*\supset A+VY$, and it requires additional efforts to show that the operator
equalities hold. In our situation, this is guaranteed by the availability of the two block diagonalizations \eqref{eq:blockDiag2nd}
and \eqref{eq:blockDiag1st}.

\begin{remark}\label{rem:spectra}
 Taking into account the kernel splitting \eqref{eq:kernel}, the subspace $\cL$ in \eqref{eq:defL} is a spectral subspace for
 $B=A+V$ if and only if $\Ker(A_0-\mu)\cap\Ker W_1^*$ or $\Ker(A_1-\mu)\cap\Ker W_1$ is trivial, in which case
 \[
  \cL = \EE_B\bigl((-\infty,\mu)\bigr) \quad\text{or}\quad \cL = \EE_B\bigl((-\infty,\mu]\bigr)\,,
 \]
 respectively. In particular, this takes place if $\Ker(A-\mu)$ is trivial, and then by \eqref{eq:kernel} also $\Ker(B-\mu)$ is
 trivial; cf.\ the discussion after Theorem \ref{thm:subordinated} above.

 If the spectrum of $A$ even satisfies $\sup\spec(A_0) < \inf\spec(A_1)$, then it is well known that the gap in the spectrum of $A$
 persists in the spectrum of $B=A+V$ and that the angular operator $X$ from the graph representation $\cL=\cG(\cH_0, X)$ is a
 uniform contraction, that is, $\norm{X}<1$, see \cite[Theorem 2.1]{AL95} and \cite[Theorem 3.1]{LT97}; a more precise bound on
 $\norm{X}$ for this case is provided by \cite[Theorem 1]{MoSe06} (see also \cite[Theorem 3.1]{GKMV13}).
\end{remark}

We close the section with an example that illustrates the application of Theorem \ref{thm:subordinated} in an
off-diagonally dominant situation. It concerns the
diagonalization of a Hamiltonian that describes massless Dirac fermions in the presence of an impurity in graphene
\cite{GN07,NGMJKGDF05,DM84}; cf.\ \cite[Theorem 3.3.7]{Tre08} for a diagonalization of the massive 
three-dimensional
Dirac operator with electromagnetic field.

\begin{example}\label{exgraph} In a zero-gap semiconductor, low energy electrons moving in the vicinity of an impurity can formally
 be described by the two-dimensional Dirac-like Hamiltonian
 \[
  H=\frac{\hbar \nu_F}{i} {\boldsymbol \sigma} \cdot {\boldsymbol\nabla}+U\,,
 \]
 where $\boldsymbol\sigma =(\sigma_x,\sigma_y)$, with $\sigma_x,\sigma_y$ the $2\times 2$ conventional Pauli matrices
 \[
  \sigma_x=\begin{pmatrix}0&1\\1&0\end{pmatrix},\quad\sigma_y=\begin{pmatrix}0&-i\\i&0\end{pmatrix},
 \]
 $\nu_F$ is the Fermi velocity and $U $ is a short range ``defect'' potential
\cite{DM84}.

 It is well known (see, e.g., \cite{Th92}) that the standard \emph{Foldy-Wouthuysen transformation} $\cT_{\textrm{FW}}$ on
 $L^2(\R^2;\C^2)$, given in the momentum representation by the multiplication operator by the unitary matrix
 \[
  \frac{1}{\sqrt{2}}
  \begin{pmatrix} \theta({\bf k}) & 1\\ \theta({\bf k}) & -1\end{pmatrix}
  \quad\text{with}\quad 
  \theta({\bf k})=\frac{\sqrt{k_x^2+k_y^2}}{k_x-ik_y}\,,
 \]
 diagonalizes the free Dirac operator $H_0:=\frac{\hbar \nu_F}{i} {\boldsymbol \sigma} \cdot {\boldsymbol \nabla}$. More precisely,
 in the system of units where $\hbar\nu_F=1$, one has
 \[
  \cT_{\textrm{FW}} H_0 \cT_{\textrm{FW}}^{-1} = \begin{pmatrix} \sqrt{-\Delta} & 0\\ 0 & -\sqrt{-\Delta} \end{pmatrix}
 \]
 with respect to the decomposition
 \[
  L^2(\R^2;\C^2)=L^2(\R^2)\oplus L^2(\R^2)\,.
 \]

 Now, it is easy to see that
 \begin{equation}\label{eq:DiracDiag}
  \cT_{\textrm{FW}} H \cT_{\textrm{FW}}^{-1} =
  \begin{pmatrix}
   \sqrt{-\Delta}+U+\Theta U \Theta^*& -U+\Theta U \Theta^*\\
   -U+\Theta U \Theta^*&-\sqrt{-\Delta}+U+\Theta U \Theta^*
  \end{pmatrix},
 \end{equation}
 where $\Theta $ is the  Fourier multiplier with the unimodular symbol $\theta({\bf k})$.

 As is known, the massless pseudorelativistic Hamiltonian $\sqrt{-\Delta}$ is subcritical in dimension two and higher:
 $\sqrt{-\Delta}+V$ does  not have a bound state if the negative part of the potential $V$ is not ``deep enough" (see
 \cite[Theorem V.1]{CMS90}; also see \cite[Eq.\ (2.15)]{Dau83} and \cite{LS97}). For instance, if $U$ is a compactly supported
bounded potential with $\|U\|_\infty$ small enough, the decomposition 
 \begin{equation}\label{eq:diagSplit}
  \pm\sqrt{-\Delta}+U+\Theta U\Theta^*=\frac{1}{2}\left((\pm\sqrt{-\Delta} +2U) + \Theta(\pm\sqrt{-\Delta} +2U)\Theta^*\right)
 \end{equation}
 shows that the diagonal entries of \eqref{eq:DiracDiag} have no negative/positive spectrum, respectively. In this case, Theorem
 \ref{thm:subordinated} applies, which means that the operator matrix \eqref{eq:DiracDiag} can be block diagonalized. In turn, the
 operator $H$ can be block diagonalized with respect to the decomposition
 \[
  L^2(\R^2; \C^2)=\cH_+\oplus\cH_-\,,
 \]
 where $\cH_\pm=\Ran\left(\EE_{H_0}(\R_\pm)\right)$ denote the ``electronic"/``positronic" subspaces of the free (massless)
 Dirac operator $H_0$, respectively.
\end{example}


\begin{thebibliography}{[10]}

 \bibitem{AL95} V.\ Adamjan, H.\ Langer,
  \emph{Spectral properties of a class of rational operator valued functions},
  J.\ Operator Theory \textbf{33} (1995), 259--277.
  
 \bibitem{ADL01} V.\ Adamjan, H.\ Langer, C.\ Tretter,
  \emph{Existence and uniqueness of contractive solutions of some Riccati equations},
  J.\ Funct.\ Anal.\ \textbf{179} (2001), 448--473.

 \bibitem{AMM03} S.\ Albeverio, K.\ A.\ Makarov, A.\ K.\ Motovilov,
  \emph{Graph subspaces and the spectral shift function},
  Canad.\ J.\ Math.\ \textbf{55} (2003), 449--503.

 \bibitem{CMS90} R.\ Carmona, W.\ C.\ Masters, B.\ Simon,
  \emph{Relativistic Schr\"{o}dinger operators: asymptotic behaviour of the eigenfunctions},
  J.\ Funct.\ Anal.\ \textbf{91} (1990), 117--142.

 \bibitem{Cue12} J.-C.\ Cuenin,
  \emph{Block-diagonalization of operators with gaps, with applications to Dirac operators},
  Rev.\ Math.\ Phys.\ \textbf{24} (2012), 1250021, 31 pp.

 \bibitem{Dau83} I.\ Daubechies,
  \emph{An uncertainty principle for fermions with generalized kinetic energy},
  Commun.\ Math.\ Phys.\ \textbf{90} (1983), 511--520.

 \bibitem{DM84} D.\ P.\ Di Vincenzo, E.\ J.\ Mele,
  \emph{Self-consistent effective-mass theory for intralayer screening in graphite intercalation compounds},
  Phys.\ Rev.\ B \textbf{29} (1984), 1685--1694.


 \bibitem{GN07} A.\ K.\ Geim, K.\ S.\ Novoselov,
  \emph{The rise of graphene},
  Nature Materials \textbf{6} (2007), 183--191.


 \bibitem{GKMV13} L.\ Grubi\v{s}i\'c, V.\ Kostrykin, K.\ A.\ Makarov, K.\ Veseli\'c,
  \emph{The $\tan2\Theta$ theorem for indefinite quadratic forms},
  J.\ Spectr.\ Theory \textbf{3} (2013), 83--100.

 \bibitem{H69} P.\ R.\ Halmos,
  \emph{Two subspaces},
  Trans.\ Amer.\ Math.\ Soc.\ \textbf{144} (1969), 381--389.

 \bibitem{Kato66} T.\ Kato,
  \emph{Perturbation Theory for Linear Operators},
  Springer, Berlin, 1966.

 \bibitem{KMM02} V.\ Kostrykin, K.\ A.\ Makarov, A.\ K.\ Motovilov,
  \emph{Existence and uniqueness of solutions to the operator Riccati equation. A geometric approach},
  In: Advances in Differential Equations and Mathematical Physics (Birmingham, AL, 2002),
  Contemp.\ Math., vol.\ 327, Amer.\ Math.\ Soc., Providence, RI, 2003, pp.\ 181--198.

 \bibitem{KMM03} V.\ Kostrykin, K.\ A.\ Makarov, A.\ K.\ Motovilov,
  \emph{On a subspace perturbation problem},
  Proc.\ Amer.\ Math.\ Soc.\ \textbf{131} (2003), 3469--3476.

 \bibitem{KMM2004} V.\ Kostrykin, K.\ A.\ Makarov, A.\ K.\ Motovilov,
  \emph{A generalization of the $\tan 2\Theta$ theorem},
  In: Current Trends in Operator Theory and Its Applications,
  Oper.\ Theory Adv.\ Appl., vol.\ 149, Birkh\"auser, Basel, 2004, pp.\ 349--372.

 \bibitem{KMM2005} V.\ Kostrykin, K.\ A.\ Makarov, A.\ K.\ Motovilov,
  \emph{On the existence of solutions to the operator Riccati equation and the tan $\Theta$ theorem}, 
  Integral Equations Operator Theory \textbf{51} (2005), 121--140.

 \bibitem{LT97} H.\ Langer, C.\ Tretter,
  \emph{Diagonalization of certain block operator matrices and applications to Dirac operators},
  In: Operator Theory and Analysis (Amsterdam, 1997),
  Oper.\ Theory Adv.\ Appl., vol.\ 122, Birkh\"auser, Basel, 2001, pp.\ 331--358.

 \bibitem{LS97} D.\ Levin, M.\ Solomyak,
  \emph{The Rozenblum-Lieb-Cwikel inequality for Markov generators},
  J.\ Anal.\ Math.\ \textbf{71} (1997), 173--193. 

 \bibitem{MSS13} K.\ A.\ Makarov, S.\ Schmitz, A.\ Seelmann,
  \emph{Reducing graph subspaces and strong solutions to operator Riccati equations},
  e-print arXiv:1307.6439v1 [math.SP] (2013).

 \bibitem{Mot95} A.\ K.\ Motovilov,
  \emph{Removal of the resolvent-like energy dependence from interactions and invariant subspaces of a total Hamiltonian},
  J.\ Math.\ Phys.\ \textbf{36} (1995), 6647--6664.

 \bibitem{MoSe06} A.\ K.\ Motovilov, A.\ V.\ Selin,
  \emph{Some sharp norm estimates in the subspace perturbation problem},
  Integral Equations Operator Theory \textbf{56} (2006), 511--542.
 
 \bibitem{NGMJKGDF05} K.\ S.\ Novoselov, A.\ K.\ Geim, S.\ V.\ Morozov, D.\ Jiang, M.\ I.\ Katsnelson, I.\ V.\ Grigorieva,
  S.\ V.\ Dubonos, A.\ A.\ Firsov,
  \emph{Two-dimensional gas of massless Dirac fermions in graphene},
  Nature \textbf{438} (2005), 197--200.

 \bibitem{SchmDiss} S.\ Schmitz,
  \emph{Representation theorems for indefinite quadratic forms and applications},
  Dissertation, Johannes Gutenberg-Universit\"at Mainz, 2014.
  
 \bibitem{Schm15} S.\ Schmitz,
  \emph{Representation theorems for indefinite quadratic forms without spectral gap},
  Integral Equations Operator Theory \textbf{83} (2015), 73--94. 

 \bibitem{Schm12} K.\ Schm\"udgen,
  \emph{Unbounded Self-adjoint Operators on Hilbert Space},
  Springer, Dordrecht, 2012.

 \bibitem{SeelDiss} A.\ Seelmann,
  \emph{Perturbation theory for spectral subspaces},
  Dissertation, Johannes Gutenberg-Universit\"at Mainz, 2014.
  
 \bibitem{Shk07} A.\ A.\ Shkalikov,
  \emph{Dissipative Operators in the Krein Space. Invariant Subspaces and Properties of Restrictions},
  Funct.\ Anal.\ Appl.\ \textbf{41} (2007), 154--167.


 \bibitem{Th92} B.\ Thaller,
  \emph{The Dirac equation},
  Texts and Monographs in Physics, Springer, Berlin, 1992.

 \bibitem{Tre08} C.\ Tretter,
  \emph{Spectral Theory of Block Operator Matrices and Applications},
  Imperial College Press, London, 2008.

 \bibitem{TW14} C.\ Tretter, C.\ Wyss,
  \emph{Dichotomous Hamiltonians with unbounded entries and solutions of Riccati equations},
  J.\ Evol.\ Equ.\ \textbf{14} (2014), 121--153.

 \bibitem{Wei00} J.\ Weidmann,
  \emph{Lineare Operatoren in Hilbertr\"aumen. Teil 1. Grundlagen},
  Mathematische Leitf\"aden, B.\ G.\ Teubner, Stuttgart, 2000.

 \bibitem{W11} C.\ Wyss,
  \emph{Hamiltonians with Riesz bases of generalised eigenvectors and Riccati equations},
  Indiana Univ.\ Math.\ J.\ \textbf{60} (2011), 1723--1765.

\end{thebibliography}
\end{document}